\newtheorem{theorem}{Theorem}[section]
\newtheorem{definition}{Definition}[section]
\newtheorem{proposition}{Proposition}[section]
\newtheorem{remark}{Remark}
\newtheorem{example}{Example}
\newtheorem{maintheorem}{Theorem}
\def \a{\alpha}
\begin{document}

\title{Conditional Ergodic Averages for Asymptotically  Additive
Potentials
 \footnotetext {2010 Mathematics Subject Classification: 37A05, 37C45, 37A30}}
\author{   Yun Zhao \\
\small \it  Department of Mathematics, Soochow University, Suzhou 215006, Jiangsu, P.R.China\\
\small \it e-mail: zhaoyun@suda.edu.cn}
\date{}
 \maketitle

 \begin{center}
\begin{minipage}{120mm}
{\small {\bf Abstract.} Using an asymptotically additive sequence
of continuous functions as a restrictive condition, this paper
studies the relations of several ergodic averages for
asymptotically additive potentials. Basic properties of
conditional maximum ergodic averages are studied. In particular,
if the dynamical systems satisfy the specification property,  the maximal growth rate of an asymptotically
additive potential on the level set is equal to its conditional
maximum ergodic averages and the maximal growth rates on the
irregular set is its maximum ergodic averages. Finally, the
applications for suspension flows are given in the end of the
paper.}
\end{minipage}
\end{center}

\vskip0.5cm

{\small{\bf Key words and phrases} \ Ergodic optimization;
Multifractal analysis; Asymptotically additive potentials
}\vskip0.5cm

 \section{Introduction}
  Throughout this paper $X$ is a compact metric space with metric $d$ and $T:X\rightarrow X$ is a continuous transformation. Such a tuple  $(X,T)$ is called a \emph{topological dynamical systems} (TDS for short).  Let $C(X)$ denote the Banach space of continuous functions from $X$ to
$\mathbb{R}$ with supremum norm $\|\cdot\|$. Let
$\mathcal{M}_T$ and $\mathcal{E}_T$ denote  the space of
$T$-invariant Borel probability measures on $X$ and the set of all
$T$-invariant ergodic Borel probability measures on $X$, respectively.

Given a continuous function $f:X\rightarrow \mathbb{R}$, the
\emph{maximum ergodic average} for $f$ is defined as
\[
\beta (f):=\sup_{\mu\in \mathcal{M}_T}\int f \mathrm{d}\mu.
\]
 Since $\mathcal{M}_T$
is weak$^*$ compact and the map $\mu\mapsto \int f \mathrm{d}\mu$
is continuous, there always exists a measure which attains the
supremum in the above formula, such a measure is called
\emph{$f-$maximizing measure}. Jenkinson \cite{je} proved the
following basic relations between different time averages of a
continuous function  $f$
\[
\beta (f)=\sup_{x\in X} \limsup_{n\rightarrow\infty} \frac 1 n S_n
f(x)=\lim_{n\rightarrow\infty}\frac 1 n \max_{x\in X}S_n
f(x)=\sup_{x\in \mathrm{Reg}(f,T)}\lim_{n\rightarrow\infty}\frac 1
n S_n f(x)
\]
where $S_n f(x):=\sum_{i=0}^{n-1}f(T^ix)$ and $\mathrm{Reg}(f,T)$
is the set of points $x\in X$ such that the limit of the sequence
$\{\frac 1 n S_n f(x)\}_{n\ge1}$ exists. Let
\[
\mathcal{M}_{\max}(f) := \left \{\mu \in  \mathcal{M}_T :\ \int f
\mathrm{d}\mu=\beta (f)\right\}
\]
denote the set of all $f-$maximizing measures. The study of the
variational problem of the functional $\beta(\cdot)$ and the set
$\mathcal{M}_{\max}(\cdot)$ has been termed {\it ergodic
optimization}, and has attracted some recent research interest
\cite{bo1,bo2,bj,br,bq,clt,je,mo1,mo2,mo3}.  Analogous problems for sub-additive
potentials has been investigated  for deterministic
dynamical systems in \cite{dai11,sch,ss00} and  for random dynamical systems in \cite{cao}.
In \cite{zhao}, author  studied  the sub-growth rate of
asymptotically sub-additive potentials and subordination principle
for sub-additive potentials. One of the motivation of the present paper
is to study the  ergodic optimization for a particular sequence of
\emph{asymptotically additive potentials} (see precise definition
in the next section and use AAP for short) which arises naturally
in the study of the dimension theory in dynamical systems (see
\cite{fh,zzc} for examples of AAP).

The study of this paper is also motivated by the theory of
multifractal analysis. The theory of multifractal analysis is a
subfield of the dimension theory of dynamical systems, its main
purpose is to study the complexity of the level sets or irregular
sets of invariant local quantities obtained from a given dynamical
system, e.g., the topological entropy or pressure on these sets.
See the books \cite{ba3,pe} for details about the theory of
multifractal analysis.  For a sequence of asymptotically additive
continuous functions $\Phi=\{\varphi_n\}_{n\ge 1}$, the \emph{level
sets} induced by the asymptotically additive potential $\Phi$ are
defined by
\[
K_\Phi(\alpha): =\{ x\in X\ :\ \lim_{n\rightarrow\infty}
\frac{1}{n}\varphi_n (x)=\alpha \}.
\]
Since these level sets are pairwise disjoint for different real
numbers $\alpha$,  they induce the natural decomposition
\[
X=\widehat{X}_\Phi\cup\bigcup_{\a\in \mathbb{R}} K_\Phi(\alpha)
\]
where $\widehat{X}_\Phi:=\{ x\in X: \lim\limits_{n\rightarrow
\infty} \frac{1}{n} \varphi_n (x) \mbox{ does not exist} \}$  is
the \emph{irregular set} for the AAP $\Phi=\{\varphi_n\}_{n\ge
1}$.

Given an AAP $\mathcal{F}=\{f_n\}_{n\ge 1}$ and a $T$-invariant
measure $\mu\in \mathcal{M}_T$, let
\[ \mathcal{F}_*(\mu)=\lim\limits_{n\rightarrow\infty}\frac 1 n \int
f_n \mathrm{d}\mu.\]  Using an asymptotically additive potential
$\Phi=\{\varphi_n\}_{n\ge 1}$ as a restrictive condition, this
paper investigates the ergodic optimization of an AAP. Precisely,
  consider two asymptotically additive
potentials $\mathcal{F}=\{f_n\}_{n\ge 1}$ and
$\Phi=\{\varphi_n\}_{n\ge 1}$,
 for any real number $\alpha\in \mathbb{R}$ define
\begin{eqnarray}\label{condi-int}
\Lambda_{\mathcal{F}\mid\Phi}(\alpha):=\sup
\bigr\{\mathcal{F}_*(\mu)\, :\,\mu\in
\mathcal{M}_T(\Phi,\alpha)\bigr\}
\end{eqnarray}
where $\mathcal{M}_T(\Phi,\alpha):=\bigr\{\mu\in
\mathcal{M}_T:\,\Phi_*(\mu)=\alpha \bigr\} $. The quantity
$\Lambda_{\mathcal{F}\mid\Phi}(\alpha)$ is called the {\it
conditional maximum ergodic average}  of the AAP
$\mathcal{F}=\{f_n\}_{n\geq 1}$ (with respect to $\Phi$), and the
following quantity
\[ \beta(\mathcal{F}):=\sup\bigr\{ \mathcal{F}_*(\mu)\ :\ \mu\in
\mathcal{M}_T\bigr\}\] is called the {\it maximum ergodic average}
of the AAP $\mathcal{F}=\{f_n\}_{n\geq 1}$. As a direct
consequence of the main result in \cite{zhao}, we can prove that
\begin{eqnarray}\label{basic-formula}
\beta(\mathcal{F})=\sup_{x\in X} \limsup_{n\rightarrow\infty}\frac
1 n f_n (x) =\lim_{n\rightarrow\infty}\frac 1 n \max_{x\in X}f_n
(x)=\sup_{x\in\mathrm{Reg}(\mathcal{F},T)}\lim_{n\rightarrow\infty}\frac
1 n f_n (x),
\end{eqnarray} where $\mathrm{Reg}(\mathcal{F},T)$ is
the set of points $x\in X$ for which the limit of the sequence
$\{\frac 1 n f_n (x)\}_{n\ge 1}$ exists. Some basic properties of
the function $\Lambda_{\mathcal{F}\mid\Phi}(\cdot)$ are studied,
e.g., the continuity and monotonicity of the function
$\Lambda_{\mathcal{F}\mid\Phi}(\cdot)$. Furthermore, if the TDS
$(X,T)$ satisfies the \emph{specification property} (see the exact
definition  in Section 2), we prove that the maximal growth rate
of an AAP $\mathcal{F}=\{f_n\}_{n\geq 1}$ on  the level sets is
equal to the conditional maximum ergodic average of $\mathcal{F}$,
that is,
\[
\sup_{x\in
K_\Phi(\alpha)}\limsup_{n\rightarrow\infty}\frac{1}{n}f_n(x)=\sup
\bigr\{\mathcal{F}_*(\mu)\ :\ \mu\in
\mathcal{M}_T(\Phi,\alpha)\bigr\},
\]
and the maximum ergodic average of an AAP
$\mathcal{F}=\{f_n\}_{n\geq 1}$ is exactly its maximum
ergodic averages on the irregular sets, i.e.,
\[
\beta(\mathcal{F})=\sup \Big\{\mathcal{F}_*(\mu)\ :\ \mu\in
\bigcup_{x\in \widehat{X}_\Phi}\mathcal{V}(x)
 \Big\}
\]
where $\mathcal{V}(x)$ is the set of all the limit points of the
empirical measures  $\delta_{x,n}:=\frac 1
n\sum_{i=0}^{n-1}\delta_{T^ix}$ and $\delta_x$ is the dirac
measure at the point $x$.
%This result means that the maximal
%integral of an AAP is just determined by its maximal integral on
%the irregular set when the system satisfies specification
%property.
%Furthermore, given a continuous function,
%this paper investigates the relative maximum ergodic average of an
%AAP, see section 2 for the definitions and discussions.

The remainder of this paper is organized as follows. The precise
statements of the main results is given in Section 2. Section 3 provides some examples to illustrate our main results. In section
4, using the methods in the theory of multifractal analysis, we
provides the proofs of all the statement is section 2. In section
5, we apply our main results to suspension flows.

\section{Statements of the results}
This section first provides some preliminaries and notations, and
then gives the statements of the main results of this paper, the
proofs are postponed to Section 4.

 A sequence of continuous functions $\mathcal{F}=\{
f_n\}_{n\geq 1}\subseteq C(X)$ is called an \emph{asymptotically
additive potential}  on $X$, if for each $\xi
>0$, there exists a continuous function $f_\xi\in C(X)$ such that
\begin{eqnarray}\label{Def-aap}
\limsup_{n\rightarrow\infty}\frac{1}{n}||f_n-S_nf_\xi||< \xi.
\end{eqnarray}
This kind of potential was introduced by Feng and Huang \cite{fh}.

Given an AAP $\mathcal{F}=\{f_n\}_{n\geq 1}$, Feng and Huang
\cite{fh} proved the following result.

\begin{theorem}\label{fhthm}Let $\mu$ be a $T-$invariant measure.  Then
the following properties hold:\begin{itemize} \item[(1)] The limit
$\mathcal{F}_*(\mu)=\lim\limits_{n\rightarrow\infty}\frac 1 n \int
f_n \,\mathrm{d}\mu$ exists and is finite. Furthermore, the limit
$\lambda_{\mathcal{F}}(x):=\lim\limits_{n\rightarrow\infty}\frac 1
n f_n(x)$ exists for $\mu-$almost every $x\in X$, and $\int
\lambda_{\mathcal{F}}(x) \,\mathrm{d}\mu=\mathcal{F}_*(\mu)$. In
particular, when $\mu\in \mathcal{E}_T,\
\lambda_{\mathcal{F}}(x)=\mathcal{F}_*(\mu)$ for $\mu-$a.e. $x\in
X$; \item[(2)] The map $\mathcal{F}_*:\mathcal{M}_T\rightarrow
\mathbb{R}$ is continuous; \item[(3)] Let
$\mu=\int_{\mathcal{E}_T} m \,\mathrm{d}\tau(m)$ be the ergodic
decomposition of $\mu\in \mathcal{M}_T$ (here $\tau$ is a probability measure on the space $\mathcal{M}_T$
such that $\tau(\mathcal{E}_T)=1$), then
$\mathcal{F}_*(\mu)=\int_{\mathcal{E}_T} \mathcal{F}_*(m)\,
\mathrm{d}\tau(m)$.\label{dl11}
\end{itemize}
\end{theorem}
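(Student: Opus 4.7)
The plan is to reduce each statement to the corresponding classical fact about the additive potential generated by the auxiliary function $f_\xi$, exploiting the defining inequality $\limsup_n \frac{1}{n}\|f_n - S_n f_\xi\| < \xi$. Throughout fix a countable sequence $\xi_k \downarrow 0$ and write $g_k := f_{\xi_k}$ for the corresponding approximants.

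For (1) I would first handle the numerical limit. Since $\mu$ is $T$-invariant, $\frac{1}{n}\int S_n g_k \, \mathrm{d}\mu = \int g_k\, \mathrm{d}\mu$, and the approximation gives $\bigl|\frac{1}{n}\int f_n\, \mathrm{d}\mu - \int g_k\, \mathrm{d}\mu\bigr| \leq \xi_k$ for $n$ sufficiently large. This forces $\{\frac{1}{n}\int f_n\, \mathrm{d}\mu\}_n$ to be Cauchy (compare two indices through a common $g_k$) and identifies the limit as $\mathcal{F}_*(\mu) = \lim_k \int g_k\, \mathrm{d}\mu$. For the pointwise statement, Birkhoff's theorem applied to each $g_k$ produces $\bar g_k$ with $\frac{1}{n}S_n g_k \to \bar g_k$ $\mu$-a.e. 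On the intersection of the full-measure convergence sets for indices $k$ and $\ell$,
\[
|\bar g_k(x) - \bar g_\ell(x)| \;\leq\; \limsup_n \tfrac{1}{n}\|S_n g_k - S_n g_\ell\| \;\leq\; \xi_k + \xi_\ell,
\]
so the sequence $\{\bar g_k\}$ is a.e.\ Cauchy; define $\lambda_\mathcal{F}$ as its a.e.\ limit. The same approximation sandwiches $\frac{1}{n}f_n$ between $\bar g_k \pm \xi_k$ in the upper/lower limit, hence $\frac{1}{n}f_n \to \lambda_\mathcal{F}$ $\mu$-a.e. Integration gives $\int \lambda_\mathcal{F}\, \mathrm{d}\mu = \lim_k \int \bar g_k\, \mathrm{d}\mu = \lim_k \int g_k\, \mathrm{d}\mu = \mathcal{F}_*(\mu)$, and for ergodic $\mu$ each $\bar g_k$ is the constant $\int g_k\, \mathrm{d}\mu$, so $\lambda_\mathcal{F} = \mathcal{F}_*(\mu)$ $\mu$-a.e.

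For (2) the estimate obtained in (1) reads $|\mathcal{F}_*(\mu) - \int g_k\, \mathrm{d}\mu| \leq \xi_k$ \emph{uniformly in $\mu \in \mathcal{M}_T$}, exhibiting $\mathcal{F}_*$ as the uniform limit on $\mathcal{M}_T$ of the weak-$*$ continuous maps $\mu \mapsto \int g_k\, \mathrm{d}\mu$; continuity of $\mathcal{F}_*$ follows at once.

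For (3) I would apply the same uniform estimate to each ergodic component: $|\mathcal{F}_*(m) - \int g_k\, \mathrm{d}m| \leq \xi_k$ for every $m \in \mathcal{E}_T$. Integrating against $\tau$ and using Fubini gives $|\int_{\mathcal{E}_T} \mathcal{F}_*(m)\, \mathrm{d}\tau(m) - \int g_k\, \mathrm{d}\mu| \leq \xi_k$, and combining this with $|\mathcal{F}_*(\mu) - \int g_k\, \mathrm{d}\mu| \leq \xi_k$ and letting $k \to \infty$ yields the decomposition formula. The main obstacle is the construction in (1) of a single $\mu$-full-measure set on which $\frac{1}{n}f_n$ converges; the uniform Cauchy behaviour of the Birkhoff averages $\bar g_k$ across different scales $\xi_k$ is what bridges the nested approximations, and is the crux of the argument.
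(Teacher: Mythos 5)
The paper does not prove this theorem: it is quoted verbatim from Feng and Huang \cite{fh} as background, so there is no in-paper proof to compare against. Judged on its own, your argument is correct and is the natural one (and is essentially the route taken in the appendix of \cite{fh}): reduce everything to the additive approximants $g_k=f_{\xi_k}$, for which invariance gives $\frac1n\int S_n g_k\,\mathrm{d}\mu=\int g_k\,\mathrm{d}\mu$ and Birkhoff gives the pointwise limits $\bar g_k$. The key estimates you use are all valid: the uniform bound $|\mathcal{F}_*(\mu)-\int g_k\,\mathrm{d}\mu|\le\xi_k$ over all of $\mathcal{M}_T$ (which immediately yields (2) as a uniform limit of weak-$*$ continuous functionals, and (3) after integrating over the ergodic decomposition and using measurability of $m\mapsto\mathcal{F}_*(m)$ supplied by (2)); the a.e.\ Cauchy estimate $|\bar g_k-\bar g_\ell|\le\xi_k+\xi_\ell$ obtained by comparing both Birkhoff averages to $\frac1n f_n$; and the sandwich $\bar g_k-\xi_k\le\liminf_n\frac1n f_n\le\limsup_n\frac1n f_n\le\bar g_k+\xi_k$ on the countable intersection of the full-measure Birkhoff sets. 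You correctly identify the only delicate point, namely producing a single full-measure set on which $\frac1n f_n$ converges, and your restriction to a countable sequence $\xi_k\downarrow 0$ handles it. Two tiny remarks: you should note explicitly that $\int\bar g_k\,\mathrm{d}\mu=\int g_k\,\mathrm{d}\mu$ (Birkhoff) and that $|\bar g_k-\lambda_{\mathcal{F}}|\le\xi_k$ a.e.\ justifies passing the limit under the integral; both are immediate from what you wrote. The proof is complete as it stands.
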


Now we recall the definition of the specification property.
Roughly speaking, a TDS $(X,T)$  has specification property if one
can always find a real orbit to interpolate between different
pieces of orbits, up to a pre-assigned error.

\begin{definition} A TDS $(X,T)$ satisfies the specification property if for each
$\epsilon>0$, there exists an integer $m=m(\epsilon)$ such that
for any collection $\{I_j:=[a_j,b_j]\subset \mathbb
N:j=1,2,...,k\}$ of finite intervals with $a_{j+1}-b_j\geq
m(\epsilon)$ for $j=1,2,...,k-1$ and any $x_1,x_2,...,x_k$ in $X$,
there exists a point $x\in X$ such that
\begin{eqnarray} \label{specification}
d(T^{p+a_j}(x),T^p(x_j))<\epsilon
\end{eqnarray}
for all $p=0,1,...,b_j-a_j $ and every $j=1,2,...,k.$
\end{definition}

This definition is weaker than the original Bowen's definition of
specification. In Bowen's definition, it is required that the
shadowing point $x$ is a periodic point of period
$p\geq b_k-a_1+m(\epsilon)$. There are well-know examples of
dynamical systems has the specification property. The most basic
example is the shift map on the full symbolic space; another
example is the topologically mixing shift map on the subshift
space of finite type \cite{dgs}.  Blokh \cite{bl} proved  that a
topologically mixing map of the interval has Bowen's
specification, a recent proof can be found in \cite{bu}.

%Now we introduce some notations. For each $\alpha\in \mathbb{R}$,
%let $\Lambda (\alpha)$ be given as (\ref{condi-int}), and
%$S(\alpha)$ denote the \emph{maximal growth rate} of an AAP
%$\mathcal{F}=\{f_n\}_{n\ge 1}$ on  level sets, i.e.,
%\begin{eqnarray*}
%S(\alpha):=\sup_{x\in
%K_\alpha}\limsup_{n\rightarrow\infty}\frac{1}{n}f_n(x).
%\end{eqnarray*}
%The following quantity
%\[
%I(\mathcal{F}):=\sup \{\mathcal{F}_*(\mu)\ :\ \mu\in \bigcup_{x\in
%X_\Phi}\mathcal{V}(x)
% \}
%\]
% is called the \emph{maximal
%integral} of the AAP on the irregular set.

Given an AAP $\Phi=\{\varphi_n\}_{n\ge 1}$ as the restrictive
condition, let
\begin{eqnarray*}
\eta(\Phi):=\min \bigr\{\Phi_*(\mu):\ \mu\in \mathcal{M}_T\bigr\}
\end{eqnarray*}
be the \emph{minimum ergodic average} of $\Phi$. By (2) of Theorem
\ref{fhthm}, we know that the set
$\mathcal{M}_T(\Phi,\alpha)=\bigr\{\mu\in
\mathcal{M}_T:\,\Phi_*(\mu)=\alpha \bigr\}$ is non-empty,  convex
and compact for each $\alpha\in [\eta(\Phi),\beta(\Phi)]$, so the
 supremum  can be replaced by  maximum  in the definition of
$\Lambda_{\mathcal{F}\mid\Phi}(\alpha)$ in (\ref{condi-int}).

The first theorem says that the function
$\alpha\mapsto\Lambda_{\mathcal{F}\mid\Phi}(\alpha)$ is continuous
on the interval $[\eta(\Phi),\beta(\Phi)]$.

\begin{maintheorem}\label{continuous}Let  $\mathcal{F}=\{f_n\}_{n\geq 1}$ and $\Phi=\{\varphi_n\}_{n\ge 1}$ be two
asymptotically additive potentials on $X$.  Then the map
$\alpha\mapsto\Lambda_{\mathcal{F}\mid\Phi}(\alpha)$ is continuous
on the interval $[\eta(\Phi),\beta(\Phi)]$.
\end{maintheorem}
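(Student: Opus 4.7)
The plan is to prove upper and lower semicontinuity separately at each $\alpha\in[\eta(\Phi),\beta(\Phi)]$. Throughout I may assume $\eta(\Phi)<\beta(\Phi)$; otherwise the interval is a singleton and there is nothing to prove.

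For upper semicontinuity at $\alpha$, let $\alpha_n\to\alpha$ with $\alpha_n\in[\eta(\Phi),\beta(\Phi)]$. Since $\mathcal{M}_T(\Phi,\alpha_n)$ is weak$^*$ compact, choose $\mu_n\in\mathcal{M}_T(\Phi,\alpha_n)$ realising $\mathcal{F}_*(\mu_n)=\Lambda_{\mathcal{F}\mid\Phi}(\alpha_n)$. Weak$^*$ compactness of $\mathcal{M}_T$ yields a subsequence $\mu_{n_k}\to\mu$. By the continuity of $\Phi_*$ (Theorem~\ref{fhthm}(2)), $\Phi_*(\mu)=\lim_k\alpha_{n_k}=\alpha$, so $\mu\in\mathcal{M}_T(\Phi,\alpha)$, and by the continuity of $\mathcal{F}_*$ we obtain $\Lambda_{\mathcal{F}\mid\Phi}(\alpha)\ge\mathcal{F}_*(\mu)=\lim_k\Lambda_{\mathcal{F}\mid\Phi}(\alpha_{n_k})$. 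Applying this argument to any subsequence achieving $\limsup_n\Lambda_{\mathcal{F}\mid\Phi}(\alpha_n)$ gives $\limsup_n\Lambda_{\mathcal{F}\mid\Phi}(\alpha_n)\le\Lambda_{\mathcal{F}\mid\Phi}(\alpha)$.

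For lower semicontinuity the key observation is that both $\Phi_*$ and $\mathcal{F}_*$ are \emph{affine} on $\mathcal{M}_T$: each is the limit of the linear functionals $\mu\mapsto n^{-1}\int\varphi_n\,\mathrm{d}\mu$ and $\mu\mapsto n^{-1}\int f_n\,\mathrm{d}\mu$, so it preserves convex combinations. Fix a maximiser $\mu\in\mathcal{M}_T(\Phi,\alpha)$ with $\mathcal{F}_*(\mu)=\Lambda_{\mathcal{F}\mid\Phi}(\alpha)$ and pick reference measures $\nu_-,\nu_+\in\mathcal{M}_T$ satisfying $\Phi_*(\nu_-)=\eta(\Phi)$ and $\Phi_*(\nu_+)=\beta(\Phi)$ (which exist by compactness of $\mathcal{M}_T$ and continuity of $\Phi_*$). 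For $\alpha_n\to\alpha$, set
\[
\mu_n=(1-t_n)\mu+t_n\nu_+,\qquad t_n=\frac{\alpha_n-\alpha}{\beta(\Phi)-\alpha},\qquad\text{if }\alpha_n\ge\alpha,
\]
and
\[
\mu_n=(1-s_n)\mu+s_n\nu_-,\qquad s_n=\frac{\alpha-\alpha_n}{\alpha-\eta(\Phi)},\qquad\text{if }\alpha_n<\alpha,
\]
using only the relevant branch when $\alpha$ is an endpoint of $[\eta(\Phi),\beta(\Phi)]$. Affineness of $\Phi_*$ yields $\Phi_*(\mu_n)=\alpha_n$, hence $\mu_n\in\mathcal{M}_T(\Phi,\alpha_n)$. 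Since $t_n,s_n\to 0$, affineness of $\mathcal{F}_*$ gives $\mathcal{F}_*(\mu_n)\to\mathcal{F}_*(\mu)=\Lambda_{\mathcal{F}\mid\Phi}(\alpha)$, and therefore $\liminf_n\Lambda_{\mathcal{F}\mid\Phi}(\alpha_n)\ge\Lambda_{\mathcal{F}\mid\Phi}(\alpha)$.

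The only real conceptual obstacle is noticing that $\mathcal{F}_*$ and $\Phi_*$ are affine functionals on $\mathcal{M}_T$, since this is what allows the explicit interpolation between $\mu$ and an extremal measure $\nu_\pm$ to produce a recovery sequence in $\mathcal{M}_T(\Phi,\alpha_n)$. With that in hand, upper semicontinuity is just compactness plus continuity, and the two estimates combine to give continuity on $[\eta(\Phi),\beta(\Phi)]$.
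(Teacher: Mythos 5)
Your proposal is correct and follows essentially the same route as the paper: upper semicontinuity via compactness of $\mathcal{M}_T$ together with continuity of $\Phi_*$ and $\mathcal{F}_*$, and lower semicontinuity by interpolating the maximiser at $\alpha$ with measures realising $\eta(\Phi)$ and $\beta(\Phi)$, exploiting that $\Phi_*$ and $\mathcal{F}_*$ are affine. The only cosmetic difference is that the paper re-derives the continuity of $\Phi_*$ along the maximising subsequence by hand via the $\xi$-approximation of the potential, whereas you invoke Theorem~\ref{fhthm}(2) directly and make the affineness explicit.
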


Since the map $\mathcal{F}_*:\mathcal{M}_T\rightarrow\mathbb{R}$
is continuous and $\mathcal{M}_T$ is weak$^*$ compact, by ergodic
decomposition theorem there exists a $T$-invariant ergodic measure
$\mu^*\in \mathcal{E}_T$ such that
$\beta(\mathcal{F})=\mathcal{F}_*(\mu^*)$. Assume that $
\Phi_*(\mu^*)=\alpha^*$. Note that $\mu^*$ may be not unique
 and
$\beta(\mathcal{F})=\Lambda_{\mathcal{F}\mid\Phi}(\alpha^*)$. The
following theorem studies the monotonicity of the function
$\alpha\mapsto\Lambda_{\mathcal{F}\mid\Phi}(\alpha)$.

\begin{maintheorem}\label{monotone} Let  $\mathcal{F}=\{f_n\}_{n\geq 1}$ and $\Phi=\{\varphi_n\}_{n\ge 1}$ be two
asymptotically additive potentials on $X$. Then the conditional
maximum ergodic average $\Lambda_{\mathcal{F}\mid\Phi}(\cdot)$ is
monotone increasing on $[\eta(\Phi), \alpha^*]$ and monotone
decreasing on $[\alpha^*,\beta(\Phi)]$.
\end{maintheorem}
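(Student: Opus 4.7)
The plan is to exploit the affinity of both $\mathcal{F}_*$ and $\Phi_*$ on the convex set $\mathcal{M}_T$, combined with the intermediate value theorem applied along line segments of measures. Affinity is immediate: for any two invariant measures $\mu_1,\mu_2$ and $t\in[0,1]$,
\[
\mathcal{F}_*(t\mu_1+(1-t)\mu_2)=\lim_{n\to\infty}\frac{1}{n}\int f_n\,\mathrm{d}(t\mu_1+(1-t)\mu_2)=t\mathcal{F}_*(\mu_1)+(1-t)\mathcal{F}_*(\mu_2),
\]
since the two limits on the right-hand side exist by Theorem~\ref{fhthm}; the same identity holds for $\Phi_*$. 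In particular, $\mathcal{M}_T(\Phi,\alpha)$ is the preimage of $\{\alpha\}$ under the continuous functional $\Phi_*$ and is a face of $\mathcal{M}_T$.

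For the increasing part, fix $\alpha_1,\alpha_2$ with $\eta(\Phi)\leq\alpha_1<\alpha_2\leq\alpha^*$. Pick $\nu_1\in\mathcal{M}_T(\Phi,\alpha_1)$ realizing $\Lambda_{\mathcal{F}\mid\Phi}(\alpha_1)=\mathcal{F}_*(\nu_1)$; such $\nu_1$ exists because $\mathcal{M}_T(\Phi,\alpha_1)$ is weak$^*$ compact and $\mathcal{F}_*$ is continuous, as already observed after the definition of $\Lambda_{\mathcal{F}\mid\Phi}$. Consider the path $\mu_t:=t\mu^*+(1-t)\nu_1$ for $t\in[0,1]$. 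By affinity, $\Phi_*(\mu_t)=t\alpha^*+(1-t)\alpha_1$ is continuous in $t$, equal to $\alpha_1$ at $t=0$ and to $\alpha^*$ at $t=1$, so the intermediate value theorem yields a $t^*\in[0,1]$ with $\Phi_*(\mu_{t^*})=\alpha_2$. Since $\mu_{t^*}\in\mathcal{M}_T(\Phi,\alpha_2)$, applying affinity of $\mathcal{F}_*$ gives
\[
\Lambda_{\mathcal{F}\mid\Phi}(\alpha_2)\geq \mathcal{F}_*(\mu_{t^*})=t^*\beta(\mathcal{F})+(1-t^*)\Lambda_{\mathcal{F}\mid\Phi}(\alpha_1)\geq \Lambda_{\mathcal{F}\mid\Phi}(\alpha_1),
\]
where the last inequality uses $\beta(\mathcal{F})\geq \Lambda_{\mathcal{F}\mid\Phi}(\alpha_1)$, which is an immediate consequence of the inclusion $\mathcal{M}_T(\Phi,\alpha_1)\subseteq \mathcal{M}_T$.

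For the decreasing part, for $\alpha^*\leq\alpha_1<\alpha_2\leq\beta(\Phi)$ I pick $\nu_2\in\mathcal{M}_T(\Phi,\alpha_2)$ achieving $\Lambda_{\mathcal{F}\mid\Phi}(\alpha_2)$ and interpolate between $\mu^*$ and $\nu_2$ in the same fashion. Now $\Phi_*(t\mu^*+(1-t)\nu_2)$ moves continuously from $\alpha_2$ (at $t=0$) down to $\alpha^*$ (at $t=1$), and hence hits $\alpha_1$ at some $t^{**}\in[0,1]$. The corresponding bound
\[
\Lambda_{\mathcal{F}\mid\Phi}(\alpha_1)\geq t^{**}\beta(\mathcal{F})+(1-t^{**})\Lambda_{\mathcal{F}\mid\Phi}(\alpha_2)\geq \Lambda_{\mathcal{F}\mid\Phi}(\alpha_2)
\]
yields the desired monotone decrease.

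There is essentially no serious obstacle here; the whole argument is a linear-interpolation trick resting on affinity of the two functionals, continuity of $\Phi_*$, and the extremal property of $\mu^*$. The only subtlety worth flagging is that $\mu^*$ itself generally does not lie in $\mathcal{M}_T(\Phi,\alpha)$ for the levels $\alpha$ under consideration, which is precisely why we blend it with $\nu_1$ (respectively $\nu_2$) in order to land on the prescribed level while picking up a nonnegative contribution from the $\beta(\mathcal{F})$ term.
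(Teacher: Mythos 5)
Your proof is correct and follows essentially the same route as the paper's: interpolate linearly between $\mu^*$ and a measure realizing the conditional maximum at the other endpoint, use affinity of $\Phi_*$ and the intermediate value theorem to land on the prescribed level, and use $\beta(\mathcal{F})\ge\Lambda_{\mathcal{F}\mid\Phi}(\alpha)$ to extract the monotonicity inequality. (One inessential aside is inaccurate: the level set $\mathcal{M}_T(\Phi,\alpha)$ is convex and compact but is not in general a \emph{face} of $\mathcal{M}_T$ unless $\alpha$ is an extreme value of $\Phi_*$; your argument never uses this claim.)
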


\begin{remark}Let $\mathcal{M}_{\max}(\mathcal{F}):=\bigr\{\mu\in \mathcal{M}_T\mid
\mathcal{F}_*(\mu)=\beta(\mathcal{F})\bigr\}$ be the set of
maximizing measures for the AAP $\mathcal{F}=\{f_n\}_{n\ge 1}$.
Since the map $\mathcal{F}_*:\mathcal{M}_T\rightarrow \mathbb{R}$
is continuous, the set $\mathcal{M}_{\max}(\mathcal{F})$ is a
non-empty,  convex and compact subset of $\mathcal{M}_T$. Let
\[
\alpha_1:=\min_{\mu\in
\mathcal{M}_{\max}(\mathcal{F})}\Phi_*(\mu)~~~\text{and}~~~\alpha_2:=\max_{\mu\in
\mathcal{M}_{\max}(\mathcal{F})}\Phi_*(\mu).
\]
By Theorem \ref{monotone} we know that
$\Lambda_{\mathcal{F}\mid\Phi}(\alpha)\equiv \beta(\mathcal{F})$
for each $\alpha\in [\alpha_1,\alpha_2]$.
\end{remark}

In the following, we consider the conditional maximum ergodic
average of an AAP $\mathcal{F}=\{f_n\}_{n\geq 1}$ at the extreme
point $\alpha=\beta(\Phi)\ \hbox{or}\ \eta(\Phi)$.

\begin{proposition} \label{extreme}
Let  $\mathcal{F}=\{f_n\}_{n\geq 1}$ and $\Phi=\{\varphi_n\}_{n\ge
1}$ be two asymptotically additive potentials on $X$. Then
$$\Lambda_{\mathcal{F}\mid\Phi}(\alpha)=\sup
\bigr\{\mathcal{F}_*(\mu):\mu\in \mathcal{E}_T\ \hbox{and}\
\Phi_*(\mu)=\alpha\bigr\}$$ when $\alpha=\beta(\Phi)\ \hbox{or}\
\eta(\Phi).$
\end{proposition}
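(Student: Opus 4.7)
The plan is to prove the two inequalities separately. The direction $\Lambda_{\mathcal{F}\mid\Phi}(\alpha)\geq \sup\{\mathcal{F}_*(\mu):\mu\in\mathcal{E}_T,\ \Phi_*(\mu)=\alpha\}$ is immediate from $\mathcal{E}_T\subseteq \mathcal{M}_T$, so the whole content is the reverse inequality at the extreme points. The key observation I would exploit is that when $\alpha$ equals $\beta(\Phi)$ or $\eta(\Phi)$, any invariant measure with $\Phi_*(\mu)=\alpha$ must decompose into ergodic components that \emph{all} (up to a null set) attain this same extreme value, because an average of numbers bounded above (respectively below) by $\alpha$ can equal $\alpha$ only if those numbers are $\tau$-a.e. equal to $\alpha$.

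The main steps I would carry out are as follows. First, fix $\alpha=\beta(\Phi)$ (the case $\eta(\Phi)$ is symmetric) and let $\mu\in \mathcal{M}_T(\Phi,\alpha)$ be arbitrary. Invoke the ergodic decomposition $\mu=\int_{\mathcal{E}_T}m\,\mathrm{d}\tau(m)$ and apply Theorem \ref{fhthm}(3) to both potentials: I get
\[
\alpha=\Phi_*(\mu)=\int_{\mathcal{E}_T}\Phi_*(m)\,\mathrm{d}\tau(m),\qquad \mathcal{F}_*(\mu)=\int_{\mathcal{E}_T}\mathcal{F}_*(m)\,\mathrm{d}\tau(m).
\]
Second, since $\Phi_*(m)\le \beta(\Phi)=\alpha$ for every $m\in \mathcal{E}_T$, the integral identity above forces $\Phi_*(m)=\alpha$ for $\tau$-a.e.\ $m\in \mathcal{E}_T$. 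Third, this implies
\[
\mathcal{F}_*(\mu)\le \sup\bigl\{\mathcal{F}_*(m):m\in \mathcal{E}_T\text{ and }\Phi_*(m)=\alpha\bigr\},
\]
since the integrand is bounded $\tau$-a.e.\ by that supremum. Taking the supremum over $\mu\in \mathcal{M}_T(\Phi,\alpha)$ yields the desired inequality. For $\alpha=\eta(\Phi)$ I would run the identical argument with the reverse inequality $\Phi_*(m)\ge \eta(\Phi)$.

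There is no substantive obstacle here; the proof is essentially a two-line application of the ergodic decomposition combined with the ``extremum of an average'' observation. The only point that requires mild care is ensuring that Theorem \ref{fhthm}(3) is applicable to both AAPs simultaneously and that the supremum on the right-hand side is well-defined (non-vacuous), which follows because the ergodic decomposition of any $\mu\in \mathcal{M}_T(\Phi,\alpha)$ provides at least one ergodic measure with $\Phi_*(m)=\alpha$.
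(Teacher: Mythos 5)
Your proposal is correct and follows essentially the same route as the paper: apply the ergodic decomposition together with Theorem \ref{fhthm}(3) to both $\Phi$ and $\mathcal{F}$, and use the observation that an average of values bounded by the extreme value $\alpha$ can equal $\alpha$ only if $\Phi_*(m)=\alpha$ for $\tau$-a.e.\ $m$. The only (inessential) difference is that the paper starts from a measure $\mu$ attaining $\Lambda_{\mathcal{F}\mid\Phi}(\alpha)$ (via compactness of $\mathcal{M}_T(\Phi,\alpha)$) and thereby also exhibits an ergodic measure realizing the supremum, whereas you bound an arbitrary $\mu$ by the supremum and then take the supremum over $\mu$.
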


%Let   $\Phi=\{
%\varphi_n\}_{n\geq 1}$  be an  AAP on $X$. For each $\alpha\in \mathbb{R}$ such that $\mathcal{M}_T(\Phi,\alpha)\neq \emptyset$, the following proposition %consider the conditional ergodic average of an AAP $\mathcal{F}=\{ f_n\}_{n\geq 1}$ when the set $\mathcal{M}_T(\Phi,\alpha)$ is perturbed slightly.
%define
%\[
%\mathcal{M}_{\max}(\mathcal{F}\mid
%\Phi,\alpha)=\bigr\{\mu\in\mathcal{M}_T(\Phi,\alpha):\,\mathcal{F}_*(\mu)=\Lambda_{\mathcal{F}\mid\Phi}(\alpha)\bigr\}.
%\]
%We have the following proposition.

%\begin{proposition}\label{relative}Let $(X,T)$ be a TDS,  $ \mathcal{F}=\{f_n\}_{n\geq
%1}$, $\mathcal{G}=\{g_n\}_{n\ge 1}$ and $\Phi=\{\varphi_n\}_{n\ge
%1}$ are  AAPs on $X$.  Assume that  $\alpha$ is a real number such that $\mathcal{M}_T(\Phi,\alpha)\neq\emptyset$ and
%$\mathcal{M}_T(\Phi+\epsilon\mathcal{G},\alpha)\neq\emptyset$ for all sufficiently small $\epsilon>0$. Then
%\[
%\bigr\{ \mathcal{F}_*(\mu) : \ \mu\in
%\mathcal{M}_{T}(\Phi+\epsilon\mathcal{G},\alpha)\bigr \}\rightarrow \bigr\{
%\Lambda_{\mathcal{F}\mid\Phi}(\alpha)\bigr \}\ \ \ \hbox{as}\
%\epsilon\searrow 0
%\]the convergence being in the Hausdorff metric.
%\end{proposition}

For any two asymptotically additive potentials
$\mathcal{F}=\{f_n\}_{n\geq 1}$ and $\Phi=\{\varphi_n\}_{n\ge 1}$,
the following theorem considers the time averages of $\mathcal{F}$
on the level sets of $\Phi$.

\begin{maintheorem}\label{thm.c}Let $(X,T)$ be a TDS satisfying the specification
property, and $\mathcal{F}=\{f_n\}_{n\geq 1}$ and
$\Phi=\{\varphi_n\}_{n\ge 1}$ two asymptotically additive
potentials on $X$.  Then
$$\Lambda_{\mathcal{F}\mid\Phi}(\alpha)=\sup_{x\in
K_\Phi(\alpha)}\limsup_{n\rightarrow\infty}\frac{1}{n}f_n(x)=\sup
\Big\{ \mathcal{F}_*(\mu) :\ \mu\in \bigcup_{x\in
K_\Phi(\alpha)}\mathcal{V}(x)\Big\}$$ for each $\alpha\in
[\eta(\Phi),\beta(\Phi)]$.
\end{maintheorem}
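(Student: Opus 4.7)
The plan is to show the two claimed equalities by establishing the cyclic inequalities
\[
B \;\le\; C \;\le\; A \;\le\; B,
\]
where $A:=\Lambda_{\mathcal{F}\mid\Phi}(\alpha)$, $B:=\sup_{x\in K_\Phi(\alpha)}\limsup_{n\to\infty}\frac{1}{n}f_n(x)$, and $C:=\sup\{\mathcal{F}_*(\mu):\mu\in\bigcup_{x\in K_\Phi(\alpha)}\mathcal{V}(x)\}$. Only the last of these will use the specification property; the other two rely solely on the asymptotically additive structure of $\mathcal{F}$ and $\Phi$ together with the weak$^*$ compactness of $\mathcal{M}_T$.

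For $B\le C$ and $C\le A$ my main tool is an approximation principle: given $\xi>0$, pick $f_\xi,\tilde f_\xi\in C(X)$ such that $\limsup_n\frac{1}{n}\|\varphi_n-S_nf_\xi\|<\xi$ and $\limsup_n\frac{1}{n}\|f_n-S_n\tilde f_\xi\|<\xi$. For any $x\in X$ and any subsequence with $\delta_{x,n_k}\to\mu$ weak$^*$, continuity gives $\frac{1}{n_k}S_{n_k}f_\xi(x)=\int f_\xi\,\mathrm{d}\delta_{x,n_k}\to\int f_\xi\,\mathrm{d}\mu$, while the AAP bounds control $|\frac{1}{n_k}\varphi_{n_k}(x)-\frac{1}{n_k}S_{n_k}f_\xi(x)|$ and $|\int f_\xi\,\mathrm{d}\mu-\Phi_*(\mu)|$ up to order $\xi$. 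Letting $\xi\to 0$ yields $\frac{1}{n_k}\varphi_{n_k}(x)\to\Phi_*(\mu)$ and $\frac{1}{n_k}f_{n_k}(x)\to\mathcal{F}_*(\mu)$. Applied to $x\in K_\Phi(\alpha)$ with $n_k$ chosen so that $\frac{1}{n_k}f_{n_k}(x)\to\limsup_n\frac{1}{n}f_n(x)$, the resulting $\mu$ lies in $\mathcal{V}(x)\cap\mathcal{M}_T(\Phi,\alpha)$ and attains $\mathcal{F}_*(\mu)=\limsup_n\frac{1}{n}f_n(x)$, which gives both $B\le C$ and $B\le A$. For $C\le A$, any $\mu\in\mathcal{V}(x)$ with $x\in K_\Phi(\alpha)$ automatically satisfies $\Phi_*(\mu)=\alpha$ by the same argument, so $\mathcal{F}_*(\mu)\le A$.

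The serious work is the reverse inequality $A\le B$, which is where specification is used. Given $\mu\in\mathcal{M}_T(\Phi,\alpha)$, the goal is to construct a point $x\in K_\Phi(\alpha)$ with $\delta_{x,n}\to\mu$ in the weak$^*$ topology along \emph{all} $n$; the same approximation principle then upgrades this to $\lim_n\frac{1}{n}\varphi_n(x)=\alpha$ and $\lim_n\frac{1}{n}f_n(x)=\mathcal{F}_*(\mu)$, so that $B\ge\mathcal{F}_*(\mu)$, and hence $B\ge A$. The construction follows the classical Sigmund scheme adapted to the AAP setting: invoke the ergodic decomposition $\mu=\int_{\mathcal{E}_T} m\,\mathrm{d}\tau(m)$ provided by Theorem \ref{fhthm}(3); approximate $\mu$ weak$^*$ by finite convex combinations $\mu_N=\sum_{i=1}^{k_N} c_{i,N}m_{i,N}$ of ergodic measures; by Birkhoff's theorem choose $m_{i,N}$-generic points $x_{i,N}$; and use the specification property, with shadowing error $\epsilon_N\downarrow 0$ and gap $m(\epsilon_N)$, to concatenate orbit pieces of lengths $\ell_{i,N}\approx c_{i,N}L_N$, where the block lengths $L_N$ are chosen to grow much faster than $\sum_{N'<N}L_{N'}$.

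The main obstacle is the bookkeeping that turns convergence at block endpoints into convergence of $\delta_{x,n}$ along \emph{every} $n\to\infty$, which is exactly what is needed to land $x$ in the level set $K_\Phi(\alpha)$ rather than merely in an accumulation set. Because $L_N$ dominates the cumulative length of all previous blocks, any $n$ falling inside block $N$ already has empirical measure close to $\mu_N$, and hence close to $\mu$. Once the weak$^*$ convergence $\delta_{x,n}\to\mu$ is secured, the AAP approximation principle is invoked twice — once to compare $\frac{1}{n}\varphi_n(x)$ with $\int f_\xi\,\mathrm{d}\delta_{x,n}$ uniformly for large $n$, and once to compare $\int f_\xi\,\mathrm{d}\mu$ with $\Phi_*(\mu)=\alpha$ — so the remaining $\xi$-errors vanish as $\xi\to 0$, closing the proof.
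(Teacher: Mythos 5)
Your proposal is correct and follows essentially the same route as the paper: the paper proves Theorem~C by specializing Theorem~E (taking $g_n=\psi_n\equiv n$), and the proof of Theorem~E consists of exactly your two ingredients --- the AAP-to-Birkhoff-sum approximation to pass between $\frac1n f_n(x)$, $\int f_\xi\,\mathrm{d}\delta_{x,n_k}$ and $\mathcal{F}_*(\mu)$ along subsequences with $\delta_{x,n_k}\to\mu$, and, for the inequality $A\le B$, the existence of a generic point $x_0\in G_\mu$ for every $\mu\in\mathcal{M}_T(\Phi,\alpha)$. The only difference is that the paper simply invokes $G_\mu\neq\emptyset$ as a known consequence of specification (cf.\ Remark~2.3) rather than re-running Sigmund's concatenation argument, which is the one place your sketch is loose (partial empirical averages inside a block are not automatically close to $\mu_N$ without further internal structure of the blocks), but that is a classical fact and not a gap in the strategy.
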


On the irregular set of $\Phi$, we have the following theorem.

\begin{maintheorem}\label{thm.d}Let $(X,T)$ be a TDS satisfying the specification
property,  and $\mathcal{F}=\{f_n\}_{n\geq 1}$ an AAP on $X$.
Assume that $\Phi=\{\varphi_n\}_{n\ge 1}$ is an AAP on $X$
satisfying $\eta(\Phi)<\beta(\Phi)$, then we have
$$\beta(\mathcal{F})=\sup_{x\in\widehat{X}_\Phi}\limsup_{n\to\infty}\frac 1n f_n(x)=
\sup\bigr\{\mathcal{F}_*(\mu):\mu\in \bigcup_{x\in\widehat{X}_\Phi}\mathcal{V}(x)\bigr\}.$$
\end{maintheorem}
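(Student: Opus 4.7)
The plan is to prove both equalities by establishing the obvious upper bounds and then constructing, via specification, a single $\Phi$-irregular point whose empirical measures accumulate at an $\mathcal{F}$-maximizing measure. The upper bounds $\beta(\mathcal{F})\geq \sup_{x\in\widehat{X}_\Phi}\limsup_n\frac{1}{n} f_n(x)$ and $\beta(\mathcal{F})\geq \sup\{\mathcal{F}_*(\mu):\mu\in \bigcup_{x\in \widehat{X}_\Phi}\mathcal{V}(x)\}$ come for free from formula~(\ref{basic-formula}) and the fact that every $\mu\in \mathcal{V}(x)$ is $T$-invariant, so the substance is entirely in the reverse inequalities.

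First, by weak$^*$-compactness of $\mathcal{M}_T$, continuity of $\mathcal{F}_*$, and Theorem~\ref{fhthm}(3), I pick an ergodic $\mu^*\in \mathcal{E}_T$ realizing $\mathcal{F}_*(\mu^*)=\beta(\mathcal{F})$ and set $\alpha^*:=\Phi_*(\mu^*)$. Since $\eta(\Phi)<\beta(\Phi)$, at least one of the two extremal values differs from $\alpha^*$, so I may pick $\nu\in \mathcal{E}_T$ with $\Phi_*(\nu)\neq \alpha^*$. I then glue generic orbit segments of $\mu^*$ and $\nu$ in an alternating pattern in the standard Takens--Verbitskii fashion: fix $\epsilon_k\downarrow 0$, set $m_k:=m(\epsilon_k)$ from the specification property, and pick $x_k,y_k\in X$ whose truncated empirical measures $\delta_{x_k,N_k}$ and $\delta_{y_k,M_k}$ are weak$^*$-$\epsilon_k$-close to $\mu^*$ and $\nu$ respectively, with $N_k,M_k$ growing so rapidly that they dominate the cumulative length of all previous blocks plus the $m_k$ gaps. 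Specification supplies a single $x\in X$ $\epsilon_k$-tracing this concatenated itinerary, and at the sample times ending each $x_k$-block (respectively each $y_k$-block) the empirical measure of $x$ is weak$^*$-close to $\mu^*$ (respectively $\nu$). Hence $\mu^*,\nu\in \mathcal{V}(x)$, and the continuity of $\Phi_*$ from Theorem~\ref{fhthm}(2) together with $\Phi_*(\mu^*)\neq \Phi_*(\nu)$ places $x$ in $\widehat{X}_\Phi$.

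To transfer $\mathcal{F}_*(\mu^*)=\beta(\mathcal{F})$ into a statement about $\frac{1}{n}f_n(x)$, I invoke the asymptotic-additivity property~(\ref{Def-aap}): for any $\xi>0$ pick $f_\xi\in C(X)$ with $\limsup_n \frac{1}{n}\|f_n-S_n f_\xi\|<\xi$. Along the subsequence $n_k$ for which $\delta_{x,n_k}\to \mu^*$, the quantity $\frac{1}{n_k}f_{n_k}(x)$ is within $\xi$ of $\int f_\xi\, d\delta_{x,n_k}=\frac{1}{n_k}S_{n_k}f_\xi(x)$ for large $k$, the latter converges to $\int f_\xi\, d\mu^*$, and $\bigl|\int f_\xi\, d\mu^*-\mathcal{F}_*(\mu^*)\bigr|\leq \xi$. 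Letting $\xi\to 0$ yields $\limsup_n \frac{1}{n} f_n(x)\geq \mathcal{F}_*(\mu^*)=\beta(\mathcal{F})$, which together with $\mu^*\in \mathcal{V}(x)$ closes both reverse inequalities.

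The principal obstacle is the quantitative bookkeeping in the gluing step: one must verify that the fast growth of $N_k$ and $M_k$ really does wash out the $O(km_k)$ gap contribution and the residue from earlier blocks, so that sampling at block-endpoints produces empirical measures converging exactly to $\mu^*$ or $\nu$, with no spurious accumulation point contaminating the value of $\Phi_*$. This is standard in the multifractal-analysis literature but tedious; an alternative would be to apply Theorem~\ref{thm.c} simultaneously at $\alpha=\alpha^*$ and at some $\alpha\neq\alpha^*$, then invoke a density/saturation argument to move a near-maximizer onto the irregular set without redoing the specification construction here.
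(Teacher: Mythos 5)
Your proposal is correct and is in substance the paper's own argument: the paper derives Theorem \ref{thm.d} by specializing Theorem \ref{thm.f} to $g_n=\psi_n\equiv n$, and the proof of Theorem \ref{thm.f} is precisely your construction --- glue, via specification, generic orbit segments of an $\mathcal{F}$-maximizing ergodic measure and of an auxiliary ergodic measure with a different $\Phi_*$-value into a single point of the irregular set, then transfer $\mathcal{F}_*$-values to $\frac{1}{n}f_n$ along the block-endpoint subsequences using the $S_nf_\xi$-approximation. The bookkeeping you defer (the nested closed dynamical balls $\overline{B}_{t_k}(z_k,\epsilon/2^{k-1})$ defining the point $p$ and the convergence $\frac{1}{t_k}S_{t_k}\phi(p)\to\int\phi\,\mathrm{d}\mu_{\rho(k)}$) is carried out explicitly in the paper's proof of Theorem \ref{thm.f}, so no essential idea is missing from your outline.
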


\begin{remark}(1)It is easy to see that if $\eta(\Phi)=\beta(\Phi):=\lambda$, then
the sequence $\frac 1 n \varphi_n(x)$  converges uniformly to the
constant $\lambda$. Therefore, in this case there is no irregular
points for the AAP $\Phi=\{\varphi_n\}_{n\ge1}$; (2) Both Theorem
\ref{thm.c} and Theorem \ref{thm.d} use the assumption that the
TDS $(X,T)$ has specification property. Under this assumption we
have $[\eta(\Phi),\beta(\Phi)]=\{\alpha\in \mathbb{R}:
K_\Phi(\alpha)\neq \emptyset\}$ ( see \cite{th1} for a proof),
this result is essentially contained in \cite{tv1}.
\end{remark}

To apply the main results to suspension flows, we consider the
following more general cases of ratios of sequence of AAPs.
Precisely, let $\Phi=\{\varphi_n\}_{n\ge 1}$ and
$\Psi=\{\psi_n\}_{n\ge 1}$ be two asymptotically additive
potentials on $X$. We always assume that
\begin{eqnarray}\label{denom}\frac 1 n\psi_n(x)\ge \sigma~~\forall n\in
\mathbb{N},~\forall x\in X\end{eqnarray} for some constant
$\sigma>0$. For each $\alpha\in \mathbb{R}$, let
\begin{eqnarray*}
E_{\Phi,\Psi}(\alpha):= \{ x\in X  : \lim_{n\rightarrow\infty}
\frac{\varphi_n(x)}{\psi_n(x)}= \alpha \} .
\end{eqnarray*}
The ergodic optimization  on this level set has a similar property
as Theorem \ref{thm.c}.

\begin{maintheorem}\label{thm.e}Let $(X,T)$ be a TDS satisfying the specification
property,  $ \mathcal{F}=\{f_n\}_{n\geq 1}$  and
$\Phi=\{\varphi_n\}_{n\ge 1}$ are AAPs on $X$. Assume that
$\mathcal{G}=\{g_n\}_{n\ge 1}$ and $\Psi=\{\psi_n\}_{n\ge 1}$ are
two AAPs satisfying (\ref{denom}). Then, for  any real number
$\alpha$ with $E_{\Phi,\Psi}(\alpha)\neq \emptyset $
\begin{eqnarray*}
\sup_{x\in E_{\Phi,\Psi}(\alpha)}\limsup_{n\rightarrow\infty}
\frac{f_n(x)}{g_n(x)}&=&\sup \Big\{
\frac{\mathcal{F}_*(\mu)}{\mathcal{G}_*(\mu)}:\ \mu\in
\mathcal{M}_T\ \hbox{and}\ \frac{\Phi_*(\mu)}{\Psi_*(\mu)}=\alpha\Big\}\\
&=&\sup \Big\{ \frac{\mathcal{F}_*(\mu)}{\mathcal{G}_*(\mu)}:\
\mu\in \bigcup_{x\in E_{\Phi,\Psi}(\alpha) }\mathcal{V}(x)\Big\}.
\end{eqnarray*}
\end{maintheorem}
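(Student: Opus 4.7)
The plan is to reduce the ratio problem to the additive framework of Theorem~\ref{thm.c} via auxiliary asymptotically additive potentials. Set
\[
\gamma^* := \sup\Bigl\{\frac{\mathcal{F}_*(\mu)}{\mathcal{G}_*(\mu)} : \mu\in \mathcal{M}_T,\ \frac{\Phi_*(\mu)}{\Psi_*(\mu)}=\alpha\Bigr\}.
\]
The set $\{\mu\in \mathcal{M}_T : \Phi_*(\mu)-\alpha\Psi_*(\mu)=0\}$ is weak$^*$ closed and convex, and the hypothesis $\mathcal{G}_*\ge\sigma>0$ makes $\mu\mapsto \mathcal{F}_*(\mu)/\mathcal{G}_*(\mu)$ continuous, so $\gamma^*$ will be attained by some $\mu^*\in \mathcal{M}_T$ once the set is shown non-empty. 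The technical backbone is a cluster-measure lemma: whenever $\delta_{x,n_k}\to \mu$ in weak$^*$, the defining inequality $\limsup_n \tfrac{1}{n}\|\varphi_n-S_n f_\xi\|<\xi$ (applied with $\xi\downarrow 0$) yields $\tfrac{1}{n_k}\varphi_{n_k}(x)\to \Phi_*(\mu)$, and likewise for $\psi_{n_k},f_{n_k},g_{n_k}$; since $\tfrac{1}{n}\psi_n,\tfrac{1}{n}g_n\ge\sigma>0$, this passes to ratios, giving $\varphi_{n_k}(x)/\psi_{n_k}(x)\to \Phi_*(\mu)/\Psi_*(\mu)$ and $f_{n_k}(x)/g_{n_k}(x)\to \mathcal{F}_*(\mu)/\mathcal{G}_*(\mu)$. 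Applying this to any $x\in E_{\Phi,\Psi}(\alpha)$ exhibits a $\mu\in\mathcal{V}(x)$ with $\Phi_*(\mu)/\Psi_*(\mu)=\alpha$, confirming non-emptiness and hence the attainment at $\mu^*$.

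Given this lemma, the upper bound is immediate. For $x\in E_{\Phi,\Psi}(\alpha)$, pick a subsequence $n_k$ along which $f_n(x)/g_n(x)$ approaches its $\limsup$, and extract a further weak$^*$ cluster measure $\mu\in \mathcal{V}(x)$. The lemma forces $\Phi_*(\mu)/\Psi_*(\mu)=\alpha$ (since $\varphi_n(x)/\psi_n(x)\to\alpha$ already along every subsequence) and identifies $\limsup_n f_n(x)/g_n(x)$ with $\mathcal{F}_*(\mu)/\mathcal{G}_*(\mu)\le \gamma^*$. This establishes simultaneously the ``$\le$'' direction in both equalities, as the witness $\mu$ lies in $\bigcup_{x\in E_{\Phi,\Psi}(\alpha)}\mathcal{V}(x)$.

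For the lower bound I would apply Theorem~\ref{thm.c} to the AAPs $\mathcal{F}':=\{f_n-\gamma^* g_n\}_{n\ge1}$ and $\Phi':=\{\varphi_n-\alpha\psi_n\}_{n\ge1}$ (the class of AAPs is a linear space). By the definition of $\gamma^*$ and the attainment at $\mu^*$, a short check gives $\Lambda_{\mathcal{F}'\mid\Phi'}(0)=0$. The main obstacle is that Theorem~\ref{thm.c} only produces an optimising point in $K_{\Phi'}(0)$, which can strictly contain $E_{\Phi,\Psi}(\alpha)$: vanishing of the normalised difference $\tfrac{1}{n}(\varphi_n-\alpha\psi_n)$ does not in general force the ratio $\varphi_n/\psi_n$ to converge. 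To bridge this gap I would revisit the specification-based construction underlying Theorem~\ref{thm.c} and observe that it actually produces a \emph{generic} point $x$ for $\mu^*$, i.e.\ $\delta_{x,n}\to \mu^*$ in weak$^*$. For such $x$ the cluster-measure lemma delivers both $\varphi_n(x)/\psi_n(x)\to \alpha$ (so $x\in E_{\Phi,\Psi}(\alpha)$ and $\mu^*\in\mathcal{V}(x)$) and $f_n(x)/g_n(x)\to \gamma^*$, yielding the matching ``$\ge$'' inequality in both equalities. The only non-routine step is this refinement, namely that the specification construction can be arranged to deliver a point that is generic for the prescribed maximising measure $\mu^*$; the remainder is a continuity-and-reduction argument built on Theorem~\ref{fhthm} and Theorem~\ref{thm.c}.
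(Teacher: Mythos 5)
Your final argument---the cluster-measure lemma (that $\delta_{x,n_k}\to\mu$ forces $\varphi_{n_k}(x)/\psi_{n_k}(x)\to\Phi_*(\mu)/\Psi_*(\mu)$ and $f_{n_k}(x)/g_{n_k}(x)\to\mathcal{F}_*(\mu)/\mathcal{G}_*(\mu)$ via the $\xi$-approximating functions) together with the existence of a generic point for the maximising measure $\mu^*$ (the saturated property, which is all that specification is used for)---is exactly the paper's proof, which runs the same $\xi$-estimates for the upper bound and both equalities and picks $x_0\in G_{\mu^*}$ for the lower bound. The detour through Theorem \ref{thm.c} applied to $\{f_n-\gamma^*g_n\}$ and $\{\varphi_n-\alpha\psi_n\}$ is unnecessary and would in any case be circular (the paper derives Theorem \ref{thm.c} from Theorem \ref{thm.e} by taking $g_n=\psi_n=n$), but since you discard it in favour of the generic-point argument, the proof stands as essentially identical to the paper's.
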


\begin{remark}\label{weak-spec} In the proof of Theorem \ref{thm.e},  we only use the fact that the generic set $G_{\mu}$ is
non-empty for each $\mu\in \mathcal{M}_T$, where
$G_{\mu}=\bigr\{x\in X:
\delta_{x,n}\rightarrow\mu\,(n\rightarrow\infty)\bigr\}$. This
property is always called \emph{saturated property} of a TDS
$(X,T)$, and is satisfied when a TDS $(X,T)$ satisfying
specification property. This observation means that Theorem
\ref{thm.e} remains true for a broader class of systems. For
example, Pfister and Sullivan \cite{ps} consider a weak
specification property which is called the \emph{$g-$almost
product property} in that paper.  In \cite{ps}, they proved that a TDS $(X,T)$
satisfies the saturated property when it has $g-$almost product
property. Therefore, Theorem \ref{thm.e} remains true if a TDS
$(X,T)$ has $g-$almost product property.
\end{remark}

Let $\Phi=\{\varphi_n\}_{n\ge 1}$ and $\Psi=\{\psi_n\}_{n\ge 1}$
be two asymptotically additive potentials given as above,  the
irregular set associated with $\Phi$ and $\Psi$ is defined as
\[
\widehat{X}_{\Phi,\Psi}:=\bigr\{x\in X: \lim_{n\rightarrow\infty}
\frac{\varphi_n(x)}{\psi_n(x)}\ \hbox{does not exist}\bigr\}.
\]

\begin{maintheorem}\label{thm.f}Let $(X,T)$ be a TDS satisfying the specification
property,  $ \mathcal{F}=\{f_n\}_{n\geq 1}$  and
$\Phi=\{\varphi_n\}_{n\ge 1}$ are AAPs on $X$. Assume that
$\mathcal{G}=\{g_n\}_{n\ge 1}$ and $\Psi=\{\psi_n\}_{n\ge 1}$ are
two AAPs satisfying (\ref{denom}), and
$\inf\limits_{\mu\in\mathcal{M}_T}\frac{\Phi_*(\mu)}{\Psi_*(\mu)}<\sup\limits_{\mu\in\mathcal{M}_T}\frac{\Phi_*(\mu)}{\Psi_*(\mu)}$,
then we have
\begin{eqnarray*}
\sup \Big\{\frac{\mathcal{F}_{*}(\mu)}{\mathcal{G}_{*}(\mu)}:\
\mu\in \mathcal{E}_T\Big\}&=&\sup
\Big\{\frac{\mathcal{F}_{*}(\mu)}{\mathcal{G}_{*}(\mu)}:\ \mu\in
\mathcal{M}_T\Big\}\\
&=&\sup
\Big\{\frac{\mathcal{F}_{*}(\mu)}{\mathcal{G}_{*}(\mu)}:\ \mu\in
\bigcup_{x\in\widehat{X}_{\Phi,\Psi}}\mathcal{V}(x)\Big\}\\
&=&\sup_{x\in \widehat{X}_{\Phi,\Psi}}\limsup_{n\to\infty}\frac{f_n(x)}{g_n(x)}.
\end{eqnarray*}
\end{maintheorem}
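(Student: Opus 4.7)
\textbf{Proof plan for Theorem \ref{thm.f}.} Denote the four suprema in the statement by $\mathrm{(I)},\mathrm{(II)},\mathrm{(III)},\mathrm{(IV)}$ in the order they appear. The plan is to establish $\mathrm{(I)}\le\mathrm{(II)}$, $\mathrm{(II)}\le\mathrm{(I)}$, and $\mathrm{(IV)}\le\mathrm{(III)}\le\mathrm{(II)}$ by essentially abstract arguments, and then close the cycle with the main inequality $\mathrm{(II)}\le\mathrm{(IV)}$, which is where the specification hypothesis enters.

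For the abstract directions, $\mathrm{(I)}\le\mathrm{(II)}$ is immediate. For $\mathrm{(II)}\le\mathrm{(I)}$, write the ergodic decomposition $\mu=\int m\,d\tau(m)$; part (3) of Theorem \ref{dl11} gives $\mathcal{F}_*(\mu)=\int\mathcal{F}_*(m)\,d\tau(m)$ and analogously for $\mathcal{G}_*$, and condition (\ref{denom}) forces $\mathcal{G}_*(m)\ge\sigma>0$. Setting $c:=\mathrm{(I)}$ we then have $\mathcal{F}_*(m)\le c\,\mathcal{G}_*(m)$ for $\tau$-a.e.\ $m$, which upon integration yields $\mathcal{F}_*(\mu)/\mathcal{G}_*(\mu)\le c$. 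For $\mathrm{(IV)}\le\mathrm{(III)}\le\mathrm{(II)}$, fix $x\in\widehat{X}_{\Phi,\Psi}$, pick a subsequence $n_k$ realizing the $\limsup$ in $\mathrm{(IV)}$, and by weak-$*$ compactness pass to a further subsequence with $\delta_{x,n_k}\to\nu\in\mathcal{V}(x)\subseteq\mathcal{M}_T$. Using the AAP approximation (\ref{Def-aap}), the quantity $f_{n_k}(x)/n_k$ differs from $\int f_\xi\,d\delta_{x,n_k}$ by at most $\xi+o(1)$, while $\int f_\xi\,d\delta_{x,n_k}\to\int f_\xi\,d\nu$; letting $\xi\to 0$ gives $f_{n_k}(x)/n_k\to\mathcal{F}_*(\nu)$ and similarly $g_{n_k}(x)/n_k\to\mathcal{G}_*(\nu)$. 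The $\limsup$ therefore equals $\mathcal{F}_*(\nu)/\mathcal{G}_*(\nu)$, bounded above by both $\mathrm{(III)}$ and $\mathrm{(II)}$.

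The heart of the proof is $\mathrm{(II)}\le\mathrm{(IV)}$: by $\mathrm{(I)}=\mathrm{(II)}$ it suffices to show that for every $\mu\in\mathcal{E}_T$ there is a point $x\in\widehat{X}_{\Phi,\Psi}$ with $\mu\in\mathcal{V}(x)$, for then the same AAP argument along the subsequence realizing $\delta_{x,n_k}\to\mu$ yields $\limsup_n f_n(x)/g_n(x)\ge\mathcal{F}_*(\mu)/\mathcal{G}_*(\mu)$ and simultaneously shows $\mu$ contributes to $\mathrm{(III)}$. Using the hypothesis $\inf\Phi_*/\Psi_*<\sup\Phi_*/\Psi_*$ and ergodic decomposition, pick two ergodic measures $\mu_1,\mu_2$ with $\Phi_*(\mu_1)/\Psi_*(\mu_1)\ne\Phi_*(\mu_2)/\Psi_*(\mu_2)$, and select Birkhoff-generic points $z_0,z_1,z_2$ for $\mu,\mu_1,\mu_2$ (so $\delta_{z_i,n}\to\mu_i$). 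Fix $\epsilon_k\downarrow 0$, let $m_k:=m(\epsilon_k)$ denote the specification constants, and use specification inductively to build $x$ that $\epsilon_k$-shadows a schedule of orbit segments $z_0,z_1,z_0,z_2,z_0,z_1,z_0,z_2,\dots$ with block lengths $N_{k,i}$ separated by gaps of length $m_k$. Choosing the $N_{k,i}$ to grow so rapidly that at the end of each $z_i$-block the cumulative empirical measure $\delta_{x,T_k^{(i)}}$ is $\epsilon_k$-close to $\mu_i$ forces $\mu,\mu_1,\mu_2\in\mathcal{V}(x)$; the AAP convergence applied at the $\mu_1$- and $\mu_2$-checkpoints then makes $\varphi_n(x)/\psi_n(x)$ acquire two distinct limit points, so $x\in\widehat{X}_{\Phi,\Psi}$. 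The main obstacle is the combinatorial bookkeeping---the $N_{k,i}$ must dominate both the previous cumulative time and $m_k$, and the errors $\|f_n-S_nf_\xi\|/n<\xi$ from (\ref{Def-aap}) must be controlled uniformly at the checkpoints---but this can be organized exactly as in the Pfister--Sullivan framework referenced in Remark \ref{weak-spec}, so no essentially new difficulty beyond the standard specification-based multifractal construction arises.
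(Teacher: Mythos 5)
Your proposal is correct and follows essentially the same route as the paper: the ergodic decomposition together with $\mathcal{G}_*(m)\ge\sigma>0$ gives $\mathrm{(I)}=\mathrm{(II)}$, the AAP approximation by Birkhoff sums of $f_\xi,g_\xi$ along convergent subsequences of empirical measures gives the upper bounds, and the specification-based gluing of increasingly long generic orbit segments produces an irregular point whose limit-point set contains the target ergodic measure. The only (harmless) deviation is that you interleave three orbit segments ($z_0$ for the target $\mu$ plus two witnesses $z_1,z_2$ of distinct $\Phi_*/\Psi_*$ ratios), whereas the paper alternates just two, letting the target measure itself serve as one of the witnesses; both constructions and the surrounding bookkeeping are the standard Takens--Verbitskiy/Thompson argument the paper uses.
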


 If the TDS $(X,T)$ satisfying the $g-$almost product property (see \cite{ps} for the definition),
then the  proof of Theorem \ref{thm.f} generalizes
unproblematically to this setting and thus Theorem \ref{thm.f}
holds for continuous maps with the g-almost product property.

\section{Examples}
This section provides examples to illustrate the main results in Section 2.

We first provide a system that does satisfy weak
specification property but does not satisfy the  specification property, however, the main results are still valid for this kind of systems.
\begin{example}
Consider the piecewise expanding maps of the interval $[0,1)$
given by $T_\beta(x)=\beta x (\!\!\!\mod 1)$, where $\beta>1$.
This family is known as \emph{beta transformations} and it was
introduced by R\'enyi in \cite{re}. From \cite{bu}, we know that
for all but countable many values of $\beta$ the transformation
$T_\beta$ does not satisfy the specification property.  However,
it follows from \cite{ps,Th10} that every $\beta$-transformation  satisfies
the \emph{$g$-almost product property}. By Remark \ref{weak-spec},
Theorem \ref{thm.c}--\ref{thm.f} are true for
$\beta$-transformation for every $\beta>1$.
\end{example}

The following particular asymptotically additive potentials were introduced by Barreira \cite{ba06} and Mummert \cite{mu06} independently to study the theory of thermodynamic formalism for a broader  class of potentials.
\begin{example}Let $\mathcal{F}=\{f_n\}_{n\ge 1}$  be an almost additive potential, i.e, there exists $C>0$ such that
$ f_n+f_m\circ T^n-C\le f_{n+m}\le f_n+f_m\circ T^n+C$ for any $n,m\in \mathbb{N}$. This kind of potential was introduced by Barreira \cite{ba06} and Mummert \cite{mu06} independently. They independently investigated the theory of thermodynamic formalism for almost additive potentials, including the existence and uniqueness of equilibrium state and Gibbs state, variational principle for almost additive topological pressure.

It is not hard to see that an almost additive potential is asymptotically additive, see \cite[Proposition A.5]{fh} or \cite[Proposition 2.1]{zzc} for proofs. Hence, Theorem \ref{continuous}--\ref{thm.f} are true for any asymptotically additive potentials.
\end{example}

The last example deals with  families of potentials
responsible for computing the largest and smallest Lyapunov exponents are asymptotically additive.

\begin{example}
Let $M$ be a $d$-dimensional smooth  manifold and $J$ a compact
expanding invariant set for a $C^1$ map $f$. Let
$\mathcal{E}(f\mid_J)$ denote the set of all $f$-invariant ergodic
measures supported on $J$.  We say that $J$ is an average
conformal repeller if all Lyapunov exponents of each ergodic
measure $\mu\in \mathcal{E}(f\mid_J)$ are equal and positive. In
particular, it follows from \cite[Theorem~4.2]{BCH10} that
\begin{equation}\label{acr-uniform}
\begin{aligned}
\lim_{n\to\infty} \frac1n  \Big( \log \|Df^n(x)\| - \log
\|Df^n(x)^{-1}\|^{-1} \Big)
    = \lim_{n\to\infty} \frac1n  \log \frac{\|Df^n(x)\|}{\|Df^n(x)^{-1}\|^{-1}}
    = 0
\end{aligned}
\end{equation}
uniformly on $J$. Let $\Psi_1=\{\log \|Df^n(x)\|\}_{n\ge 1}$ and  $\Psi_2=\{\log \|Df^n(x)^{-1}\|^{-1}\}_{n\ge 1}$,  it is not hard to
check that these two  potentials are asymptotically additive since
they can be  approximated by the additive potentials
$\{\frac 1d\log |\det(Df^n(x))|\}_{n\ge 1}$. Note that the potential
$\Psi_3=\bigr\{\log \frac{\|Df^n(x)\|}{\|Df^n(x)^{-1}\|^{-1}}\bigr\}_{n\ge 1}$
is also asymptotically additive.

Assume further that $J$ is topological mixing, then $(J,f)$
satisfies the specification property. The following facts hold for
the expanding system $(J,f)$:
\begin{itemize}
\item[(i)] By \eqref{acr-uniform} there is no irregular point for
the AAP $\Psi_3$; \item[(ii)] Given any AAP
$\Phi=\{\varphi_n\}_{n\ge 1}$ satisfying $\eta(\Phi)<\beta(\Phi)$,
the following properties hold:
\begin{itemize}
\item[(1)]by Theorem \ref{thm.c} and \eqref{acr-uniform}, for each
$\alpha\in [\eta(\Phi),\beta(\Phi)]$ we have
\begin{eqnarray*}&&\Lambda_{\Psi_1\mid\Phi}(\alpha)=\sup\limits_{x\in
K_\Phi(\alpha)}\limsup\limits_{n\rightarrow\infty}\frac{1}{n}\log
\|Df^n(x)\|\\&&=\sup \Big\{ \Psi_{1*}(\mu) :\ \mu\in
\bigcup\limits_{x\in
K_\Phi(\alpha)}\mathcal{V}(x)\Big\}=\Lambda_{\Psi_2\mid\Phi}(\alpha);\end{eqnarray*}
\item[(2)] by \eqref{basic-formula}, Theorem \ref{thm.d} and \eqref{acr-uniform} we have
\begin{eqnarray*}
\sup\bigr\{\Psi_{1*}(\mu):\mu\in
\bigcup_{x\in\widehat{X}_\Phi}\mathcal{V}(x)\bigr\}&=&\sup\bigr\{\Psi_{2*}(\mu):\mu\in
\bigcup_{x\in\widehat{X}_\Phi}\mathcal{V}(x)\bigr\}\\
&=&\sup_{x\in\widehat{X}_\Phi }\limsup_{n\rightarrow\infty} \frac1n \log
\|Df^n(x)\|\\
&=&\sup_{x\in J}\limsup_{n\rightarrow\infty} \frac1n \log
\|Df^n(x)\|;
\end{eqnarray*}
\end{itemize}
\item[(iii)]Let $\Psi_3$ be the restrictive condition, by \eqref{acr-uniform}, Theorems \ref{thm.c} and
\ref{thm.d}, for any asymptotically additive potential
$\mathcal{G}=\{g_n\}_{n\ge 1}$ we have
\begin{eqnarray*}\Lambda_{\mathcal{G}\mid\Psi_3}(0)=\sup\limits_{x\in
K_{\Psi_3}(0)}\limsup\limits_{n\rightarrow\infty}\frac{1}{n}g_n(x)=\sup
\Big\{ \mathcal{G}_{*}(\mu) :\ \mu\in \bigcup\limits_{x\in
K_{\Psi_3}(0)}\mathcal{V}(x)\Big\}=\beta(\mathcal{G}).\end{eqnarray*}
\end{itemize}
\end{example}

\section{Proofs}
This section provides the proofs of the theorems stated in Section
2.

 \begin{proof}[Proof of Theorem \ref{continuous}] Assume that
$\eta(\Phi)=\Phi_*(\mu_1)$ and $\beta(\Phi)=\Phi_*(\mu_2)$ for
some invariant measures $\mu_1,\mu_2\in\mathcal{M}_T $. Let
$\{\alpha_n\}_{n\ge 1}\subseteq [\eta(\Phi),\beta(\Phi)]$ so that
$\alpha_n\rightarrow \alpha$ as $n\rightarrow\infty$, to prove the
continuity of the map $\Lambda_{\mathcal{F}\mid \Phi}(\cdot)$ it
suffices to prove that
$$\lim\limits_{n\rightarrow\infty}\Lambda_{\mathcal{F}\mid \Phi}(\alpha_n)=\Lambda_{\mathcal{F}\mid \Phi}(\alpha).$$

For each $n\ge 1$, since $\mathcal{M}_T(\Phi,\alpha_n)$ is
non-empty and compact, there exists an invariant measure $\mu_n\in
\mathcal{M}_T(\Phi,\alpha_n)$ such that $\Lambda_{\mathcal{F}\mid
\Phi}(\alpha_n)=\mathcal{F}_{*}(\mu_n)$. Choose  a subsequence of
integers $\{n_j\}_{j\geq 1}$  such that
$$\limsup\limits_{n\rightarrow\infty}\Lambda_{\mathcal{F}\mid \Phi}(\alpha_n)=\lim\limits_{j\rightarrow\infty}\Lambda_{\mathcal{F}\mid \Phi}(\alpha_{n_j})$$
and  $\mu_{n_j}\rightarrow \mu\,(j\rightarrow\infty)$ for some
$\mu\in \mathcal{M}_T$. The strategy for the proof is to
approximate the  asymptotically additive potential by
appropriate Birkhoff sums associated with some continuous
function. Indeed, fix a small number $\xi>0$ and $\varphi_\xi$ is
a continuous function given by (\ref{Def-aap}) approximating
$\Phi$. We have
\[
\int\varphi_\xi\, \mathrm{d}\mu =\lim_{j\rightarrow\infty}
\int\varphi_\xi\, \mathrm{d}\mu_{n_j}.
\]
Therefore, for all sufficiently large $j$ the following holds:
\begin{eqnarray*}
\Big| \Phi_*(\mu)-\alpha\Big|&\le&
\Big|\Phi_*(\mu)-\int\varphi_\xi\,
\mathrm{d}\mu\Big|+\Big|\int\varphi_\xi\,
\mathrm{d}\mu-\int\varphi_\xi\,
\mathrm{d}\mu_{n_j}\Big|+\Big|\int\varphi_\xi\,
\mathrm{d}\mu_{n_j}-\alpha\Big|\\
&\le& \xi+\xi+\Big|\int\varphi_\xi\,
\mathrm{d}\mu_{n_j}-\alpha\Big|.
\end{eqnarray*}
Since  $\mu_{n_j}\in \mathcal{M}_T(\Phi,\alpha_{n_j})$, we have
\begin{eqnarray*}
\Big|\int\varphi_\xi\, \mathrm{d}\mu_{n_j}-\alpha\Big|&\le&
\Big|\int\varphi_\xi\,
\mathrm{d}\mu_{n_j}-\Phi_*(\mu_{n_j})\Big|+\Big|\Phi_*(\mu_{n_j})-\alpha\Big|\\
&\le&\xi+\bigr|\alpha_{n_j}-\alpha\bigr|\\
&\le&2\xi
\end{eqnarray*}
for all sufficiently large $j$. The above two inequalities yield
that
\[
\Big| \Phi_*(\mu)-\alpha\Big|\le 4\xi.
\]
 The arbitrariness of $\xi$ implies that  $\mu\in
\mathcal{M}_T(\Phi,\alpha)$. Therefore
\begin{eqnarray}\label{usc}
\limsup_{n\rightarrow\infty}\Lambda_{\mathcal{F}\mid\Phi}(\alpha_n)=\lim_{j\rightarrow\infty}\Lambda_{\mathcal{F}\mid\Phi}(\alpha_{n_j})=\lim_{j\rightarrow\infty}\mathcal{F}_{*}(\mu_{n_j})=\mathcal{F}_{*}(\mu)\leq
\Lambda_{\mathcal{F}\mid\Phi}(\alpha).
\end{eqnarray}

On the other hand, choose $\mu\in \mathcal{M}_T(\Phi,\alpha)$ such
that $\mathcal{F}_{*}(\mu)=\Lambda_{\mathcal{F}\mid\Phi}(\alpha)$.
For each $n\ge 1$, if $\eta(\Phi)\le \alpha_n\le \alpha$, take a
$T-$invariant measure $\mu_p$ of the form $p\mu_1+(1-p)\mu$ such
that $\Phi_*(\mu_p)=\alpha_n$. If $\alpha\le
\alpha_n\le\beta(\Phi)$, consider a $T-$invariant measure $\mu_p$
of the form $p\mu_2+(1-p)\mu$ such that $\Phi_*(\mu_p)=\alpha_n$.
Note that $p\rightarrow 0$ as $n\rightarrow\infty$, and
\[
\mathcal{F}_{*}(\mu_p)=p\mathcal{F}_{*}(\mu_1)+(1-p)\mathcal{F}_{*}(\mu)~\text{or}~p\mathcal{F}_{*}(\mu_2)+(1-p)\mathcal{F}_{*}(\mu).
\]
Hence,
\begin{eqnarray}\label{lsc}
\liminf_{n\rightarrow\infty}\Lambda_{\mathcal{F}\mid\Phi}(\alpha_n)\geq
\lim_{p\rightarrow
0}\mathcal{F}_{*}(\mu_p)=\mathcal{F}_{*}(\mu)=\Lambda_{\mathcal{F}\mid\Phi}(\alpha).
\end{eqnarray}
Combining inequalities (\ref{usc}) and (\ref{lsc}), the desired
result immediately follows.
\end{proof}

\begin{remark}\label{asp}Note that in the proof of the inequalities (\ref{usc}) and (\ref{lsc}),  it
is enough to require that the map $\mu\mapsto\mathcal{F}_{*}(\mu)$
is upper-semi-continuous.  Therefore,  Theorem \ref{continuous} is
also true for \emph{asymptotically sub-additive potentials}.
  A sequence $\mathcal{F}=\{
f_n\}_{n\geq 1}\subseteq C(X)$ is called an asymptotically
sub-additive potential on $X$, if for each $\xi
>0$ there exists a sub-additive potential
$\Phi^\xi=\{\varphi_{n}^{\xi}\}_{n\geq 1}\subset C(X)$, i.e.
$\varphi_{n+m}^{\xi}\leq
\varphi_{n}^{\xi}+\varphi_{m}^{\xi}\circ T^n$  for any $x\in X$ and any
$n,m\in \mathbb{N}$, such that
\[
\limsup_{n\rightarrow\infty}\frac{1}{n}||f_n-\varphi_{n}^{\xi}||<
\xi.
\]
 In this case, it was proved by
Feng and Huang(cf. \cite{fh}) that the map
$\mu\mapsto\mathcal{F}_{*}(\mu)$ is
upper-semi-continuous.\end{remark}

\begin{proof}[Proof of Theorem \ref{monotone}]
We first show that $\Lambda_{\mathcal{F}\mid\Phi}(\alpha)$ is
monotone increasing on $[\eta(\Phi), \alpha^*]$. Let
$\alpha_1,\alpha_2\in [\eta(\Phi), \alpha^*]$ and
$\alpha_1<\alpha_2$, choose a $T-$invariant measure $\nu\in
\mathcal{M}_T(\Phi,\alpha_1)$  so that
$\mathcal{F}_{*}(\nu)=\Lambda_{\mathcal{F}\mid\Phi}(\alpha_1)$.
For $t\in [0,1]$, put $\mu_t=(1-t)\nu+t\mu^*$, then we have
$$\mathcal{F}_{*}(\mu_t)=(1-t)\Lambda_{\mathcal{F}\mid\Phi}(\alpha_1)+t\Lambda_{\mathcal{F}\mid\Phi}(\alpha^*).$$
Since
$\Lambda_{\mathcal{F}\mid\Phi}(\alpha^*)=\beta(\mathcal{F})$, we
have
$$\frac{\mathrm{d}}{\mathrm{d}t}\mathcal{F}_{*}(\mu_t)=\Lambda_{\mathcal{F}\mid\Phi}(\alpha^*)-\Lambda_{\mathcal{F}\mid\Phi}(\alpha_1)\geq
0.$$ Therefore, the map $\mathcal{F}_{*}(\mu_t)$ is monotone
increasing w.r.t. $t$ on the interval $[0,1]$. Since
$\alpha_1<\alpha_2\le\alpha^*$,  there exists $t_0\in [0,1]$ such
that $$\Phi_*(\mu_{t_0})=(1-t_0)\alpha_1+t_0\alpha^*=\alpha_2.$$
Hence,
\begin{eqnarray*}
\Lambda_{\mathcal{F}\mid\Phi}(\alpha_1)=\mathcal{F}_{*}(\mu_0)\leq
\mathcal{F}_{*}(\mu_{t_0})\leq
\Lambda_{\mathcal{F}\mid\Phi}(\alpha_2)
\end{eqnarray*}
which shows that $\Lambda_{\mathcal{F}\mid\Phi}(\alpha)$ is
monotone increasing on $[\eta(\Phi), \alpha^*]$.

Next we prove that $\Lambda_{\mathcal{F}\mid\Phi}(\alpha)$ is
monotone decreasing on $[\alpha^*,\beta(\Phi)]$. The methods used
here are similar as the above arguments. Let $\alpha_3,\alpha_4\in
[\alpha^*,\beta(\Phi)]$ and $\alpha_3<\alpha_4$,  choose a
$T-$invariant measure $m\in \mathcal{M}_T(\Phi,\alpha_4)$ so that
$\mathcal{F}_{*}(m)=\Lambda_{\mathcal{F}\mid\Phi}(\alpha_4)$. For
$t\in [0,1]$, put $\nu_t=(1-t)\mu^*+tm$, then we have
$$\mathcal{F}_{*}(\nu_t)=(1-t)\Lambda_{\mathcal{F}\mid\Phi}(\alpha^*)+t\Lambda_{\mathcal{F}\mid\Phi}(\alpha_4).$$
Since
$$\frac{\mathrm{d}}{\mathrm{d}t}\mathcal{F}_{*}(\nu_t)=\Lambda_{\mathcal{F}\mid\Phi}(\alpha_4)-\Lambda_{\mathcal{F}\mid\Phi}(\alpha^*)\leq
0,$$ the map $\mathcal{F}_{*}(\nu_t)$ is monotone decreasing
w.r.t. $t$ on the interval $[0,1]$.  And since
$\alpha^*\le\alpha_3<\alpha_4$, there exists $t_1\in [0,1]$ such
that
$$\Phi_*(\nu_{t_1})=(1-t_1)\alpha^*+t_1\alpha_4=\alpha_3.$$ Hence,
\begin{eqnarray*}
\Lambda_{\mathcal{F}\mid\Phi}(\alpha_4)=\mathcal{F}_{*}(\nu_1)\leq
\mathcal{F}_{*}(\nu_{t_1})\leq
\Lambda_{\mathcal{F}\mid\Phi}(\alpha_3).
\end{eqnarray*}
This shows that $\Lambda_{\mathcal{F}\mid\Phi}(\alpha)$ is
monotone decreasing on $[\alpha^*,\beta(\Phi)]$.
\end{proof}
The same reason as presented in Remark \ref{asp}, Theorem
\ref{monotone} is also true for asymptotically sub-additive
potentials.

\begin{proof}[Proof of Proposition \ref{extreme}]
This proof only deals with the case $\alpha=\beta(\Phi)$, since
the
 other case of $\alpha=\eta(\Phi)$ can be proven in a similar
fashion. Since $\mathcal{M}_T(\Phi,\beta(\Phi))$ is a compact
subset of $\mathcal{M}_T$, there exists a $T$-invariant measure
$\mu\in\mathcal{M}_T(\Phi,\beta(\Phi))$ such that
$\mathcal{F}_*(\mu)=\Lambda_{\mathcal{F}\mid\Phi}(\beta(\Phi))$.
Let $\mu=\int_{\mathcal{E}_T} m \,\mathrm{d}\tau(m)$ be the
ergodic decomposition of $\mu$, then there exists a full
$\tau-$measure set $\Omega\subset \mathcal{E}_T$ such that
$\Phi_*(m)=\beta(\Phi)$ for each $m\in \Omega$. Using (3) of
Theorem \ref{fhthm}, we have
\begin{eqnarray*}
\Lambda_{\mathcal{F}\mid\Phi}(\beta(\Phi))=\mathcal{F}_*(\mu)=\int_{\mathcal{E}_T}
\mathcal{F}_*(m)\,\mathrm{d}\tau(m)=\int_{\Omega}
\mathcal{F}_*(m)\,\mathrm{d}\tau(m).
\end{eqnarray*}
Notice that $\mathcal{F}_*(m)\le
\Lambda_{\mathcal{F}\mid\Phi}(\beta(\Phi))$ for each $T$-invariant
ergodic measure $m\in \Omega$, therefore there must exists some
$T$-invariant ergodic measure $m\in \Omega$ such that
$\Lambda_{\mathcal{F}\mid\Phi}(\beta(\Phi))=\mathcal{F}_*(m)$.
This completes the proof.
\end{proof}

\begin{proof}[Proof of Theorem \ref{thm.c}] We consider the
asymptotically additive potentials $g_n(x)\equiv n$ and
$\psi_n(x)\equiv n$ for each $x\in X$ and $n\in \mathbb{N}$, then
Theorem \ref{thm.c} is a direct consequence of Theorem
\ref{thm.e}.
\end{proof}

\begin{proof}[Proof of Theorem \ref{thm.d}] In Theorem
\ref{thm.f}, we consider the particular asymptotically additive
potentials $g_n(x)\equiv n$ and $\psi_n(x)\equiv n$ for each $x\in
X$ and $n\in \mathbb{N}$, then Theorem \ref{thm.d} is a direct
consequence of Theorem \ref{thm.f}.
\end{proof}

\begin{proof}[Proof of Theorem \ref{thm.e}]  We divide the proof  into
 two parts.

 {\bf Part I:} For any real number $\alpha\in \mathbb{R}$ such
 that $E_{\Phi,\Psi}(\alpha)\neq\emptyset$, we will show that
 \[
\sup \Big\{ \frac{\mathcal{F}_*(\mu)}{\mathcal{G}_*(\mu)}:\ \mu\in
\mathcal{M}_T\ \hbox{and}\
\frac{\Phi_*(\mu)}{\Psi_*(\mu)}=\alpha\Big\} =\sup \Big\{
\frac{\mathcal{F}_*(\mu)}{\mathcal{G}_*(\mu)}:\ \mu\in
\bigcup_{x\in E_{\Phi,\Psi}(\alpha) }\mathcal{V}(x)\Big\}.
 \]

 Choose a $T$-invariant measure $\mu\in \mathcal{M}_T$ such that $\frac{\Phi_*(\mu)}{\Psi_*(\mu)}=\alpha.$ Since the TDS  $(X,T)$
satisfies the specification property, there exists some point
$x_0\in G_\mu$, i.e.,
$\delta_{x_0,n}\rightarrow\mu$ as $n\rightarrow\infty$. This means
that $\mathcal{V}(x_0)=\{\mu\}$. Fix a small number $\xi>0$, let
 $\varphi_\xi$ and $\psi_\xi$ be continuous
functions given by (\ref{Def-aap}) approximating $\Phi$ and $\Psi$
respectively. Note that
\[
\lim_{n\rightarrow\infty}\frac 1 n S_n\varphi_\xi(x_0)=\int
\varphi_\xi\,\mathrm{d}\mu~~\text{and}~~\lim_{n\rightarrow\infty}\frac
1 n S_n\psi_\xi(x_0)=\int \psi_\xi\,\mathrm{d}\mu.
\]
Therefore, for all sufficiently large $n$ we have that
\begin{eqnarray*}
\frac{\varphi_n(x_0)}{\psi_n(x_0)}\le
\frac{S_n\varphi_\xi(x_0)+\xi}{S_n\psi_\xi(x_0)-\xi}\le \frac{\int
\varphi_\xi\,\mathrm{d}\mu+2\xi}{\int
\psi_\xi\,\mathrm{d}\mu-2\xi}\le
\frac{\Phi_*(\mu)+3\xi}{\Psi_*(\mu)-3\xi}.
\end{eqnarray*}
Similarly, for all sufficiently large $n$ we obtain
\[
\frac{\varphi_n(x_0)}{\psi_n(x_0)}\ge\frac{\Phi_*(\mu)-3\xi}{\Psi_*(\mu)+3\xi}.
\]
 The above two inequalities yield that
\[
\lim_{n\rightarrow\infty}\frac{\varphi_n(x_0)}{\psi_n(x_0)}=\frac{\Phi_*(\mu)}{\Psi_*(\mu)}=\alpha.
\]
Hence, we know that $x_0\in E_{\Phi,\Psi}(\alpha)$. In consequence
we have that $\mu\in \bigcup\limits_{x\in
E_{\Phi,\Psi}(\alpha)}\mathcal{V}(x)$. This yields that
\[
\sup \Big\{ \frac{\mathcal{F}_*(\mu)}{\mathcal{G}_*(\mu)}:\ \mu\in
\mathcal{M}_T\ \hbox{and}\
\frac{\Phi_*(\mu)}{\Psi_*(\mu)}=\alpha\Big\}\le\sup \Big\{
\frac{\mathcal{F}_*(\mu)}{\mathcal{G}_*(\mu)}:\ \mu\in
\bigcup_{x\in E_{\Phi,\Psi}(\alpha) }\mathcal{V}(x)\Big\}.
 \]

Conversely, given $\mu\in \mathcal{V}(x)$ for some $x\in
E_{\Phi,\Psi}(\alpha)$, we may assume that there exists a
subsequence of positive integers $\{n_k\}_{k\geq 1}$ such that
$\delta_{x,n_k}\rightarrow\mu$ as $k\rightarrow\infty$. Clearly, $\mu$
is a $T$-invariant measure.  Fix a small number $\xi>0$, let
 $\varphi_\xi$ and $\psi_\xi$ be continuous
functions given by (\ref{Def-aap}) approximating $\Phi$ and $\Psi$
respectively. Then we have
\[
\int \varphi_\xi\, \mathrm{d}\mu=\lim_{k\rightarrow\infty} \int
\varphi_\xi\,
\mathrm{d}\delta_{x,n_k}=\lim_{k\rightarrow\infty}\frac 1 n_k
S_{n_k}\varphi_\xi(x)
\]
and
\[
\int \psi_\xi\, \mathrm{d}\mu=\lim_{k\rightarrow\infty} \int
\psi_\xi\, \mathrm{d}\delta_{x,n_k}=\lim_{k\rightarrow\infty}\frac
1 n_k S_{n_k}\psi_\xi(x).
\]
Hence, for all sufficiently lareg $k$ we have
\begin{eqnarray*}
\frac{\Phi_*(\mu)}{\Psi_*(\mu)}&\le& \frac{\int \varphi_\xi\,
\mathrm{d}\mu+\xi}{\int \psi_\xi\,
\mathrm{d}\mu-\xi}\le\frac{\frac{1}{n_k}
S_{n_k}\varphi_\xi(x)+2\xi}{\frac{1}{n_k} S_{n_k}\psi_\xi(x)-2\xi}\\
&\le&\frac{\varphi_{n_k}(x)+3\xi}{\psi_{n_k}(x)-3\xi}\le
\alpha+5\xi,
\end{eqnarray*}
the last inequality holds since  $x\in E_{\Phi,\Psi}(\alpha)$.
Similarly, we can obtain that
\[
\frac{\Phi_*(\mu)}{\Psi_*(\mu)}\ge\alpha-5\xi.
\]
The arbitrariness of $\xi$ implies that
$$\frac{\Phi_*(\mu)}{\Psi_*(\mu)}=\alpha.$$
This yields that \[ \sup \Big\{
\frac{\mathcal{F}_*(\mu)}{\mathcal{G}_*(\mu)}:\ \mu\in
\mathcal{M}_T\ \hbox{and}\
\frac{\Phi_*(\mu)}{\Psi_*(\mu)}=\alpha\Big\}\ge\sup \Big\{
\frac{\mathcal{F}_*(\mu)}{\mathcal{G}_*(\mu)}:\ \mu\in
\bigcup_{x\in E_{\Phi,\Psi}(\alpha) }\mathcal{V}(x)\Big\}.
 \]
Combining the above arguments, the desired result immediately
follows.

{\bf Part II:} In this part, we will show that
\[
\sup_{x\in
E_{\Phi,\Psi}(\alpha)}\limsup_{n\rightarrow\infty}\frac{f_n(x)}{g_n(x)}=\sup
\Big\{ \frac{\mathcal{F}_*(\mu)}{\mathcal{G}_*(\mu)}:\ \mu\in
\mathcal{M}_T\ \hbox{and}\
\frac{\Phi_*(\mu)}{\Psi_*(\mu)}=\alpha\Big\}.
\]

The strategy of the proof  is to approximate the asymptotically
additive potentials by appropriate Birkhoff sums associated with
some continuous function.

Given a small number $\xi>0$, since $\mathcal{F}=\{
f_n\}_{n=1}^{\infty}$ is an AAP, there exists a continuous
function  $f_\xi\in C(X)$  such that
\begin{eqnarray}\label{ds34}
\frac 1 n S_nf_\xi(x) -\xi\leq \frac 1 n f_n(x)\leq \frac 1 n
S_nf_\xi(x) +\xi,\ \ \forall x\in X
\end{eqnarray}
for all sufficiently large $n$. This yields that
\begin{eqnarray}\label{approx}
\mathcal{F}_*(\mu)=\lim_{n\rightarrow\infty}\frac 1 n \int
f_n(x)\, \mathrm{d}\mu=\lim_{\xi\rightarrow 0} \int f_\xi(x) \,
\mathrm{d}\mu
\end{eqnarray}
for each $T$-invariant measure $\mu\in \mathcal{M}_T$. Similarly,
for the AAP $\mathcal{G}=\{g_n\}_{n\ge 1}$, there exists a
continuous function $g_\xi$ such that the corresponding properties
(\ref{ds34}) and (\ref{approx}) hold.

Fix a number $\alpha\in \mathbb{R}$ such that
$E_{\Phi,\Psi}(\alpha)\neq\emptyset$. Take $x\in
E_{\Phi,\Psi}(\alpha)$ and choose a subsequence of positive
integers $\{n_j\}_{j\geq 1}$ such that the following properties
hold:
\begin{itemize}
 \item[(i)] $\lim\limits_{j\rightarrow\infty}\frac
{f_{n_j}(x)}{g_{n_j}(x)}
=\limsup\limits_{n\rightarrow\infty}\frac{f_{n}(x)}{g_n(x)}$;
\item[(ii)] $\delta_{x,n_j}\rightarrow \mu$ for some $\mu\in
\mathcal{M}_T$ as $j\rightarrow\infty$.\end{itemize} Take the same
$\xi>0$ as above, and $\varphi_\xi$ and $\psi_\xi$ are continuous
functions given by (\ref{Def-aap}) approximating $\Phi$ and $\Psi$
respectively. Note that
\[
\int \varphi_\xi\, \mathrm{d}\mu=\lim_{j\rightarrow\infty} \int
\varphi_\xi\,
\mathrm{d}\delta_{x,n_j}=\lim_{j\rightarrow\infty}\frac{1}{n_j}
S_{n_j}\varphi_\xi(x)
\]
and
\[
\int \psi_\xi\, \mathrm{d}\mu=\lim_{j\rightarrow\infty} \int
\psi_\xi\,
\mathrm{d}\delta_{x,n_j}=\lim_{j\rightarrow\infty}\frac{1}{n_j}
S_{n_j}\psi_\xi(x).
\]
Therefore, for all sufficiently large $j$ we have
\begin{eqnarray*}
\frac{\Phi_*(\mu)}{\Psi_*(\mu)}&\le& \frac{\int \varphi_\xi\,
\mathrm{d}\mu+\xi}{\int \psi_\xi\,
\mathrm{d}\mu-\xi}\le\frac{\frac{1}{n_j}
S_{n_j}\varphi_\xi(x)+2\xi}{\frac{1}{n_j} S_{n_j}\psi_\xi(x)-2\xi}\\
&\le&\frac{\varphi_{n_j}(x)+3\xi}{\psi_{n_j}(x)-3\xi}\le
\alpha+5\xi,
\end{eqnarray*}
where the last inequality holds since $x\in
E_{\Phi,\Psi}(\alpha)$. Similarly, we can prove that
\[
\frac{\Phi_*(\mu)}{\Psi_*(\mu)}\ge \alpha-5\xi.
\]
By the arbitrariness of $\xi$, we have
\[
\frac{\Phi_*(\mu)}{\Psi_*(\mu)}= \alpha.
\]

On the other hand, note that
\[
\frac{1}{n_j} \|f_{n_j}(x)- S_{n_j}f_\xi\|
<\xi~~\text{and}~~\frac{1}{n_j} \|g_{n_j}(x)- S_{n_j}g_\xi\| <\xi
\]
for all sufficiently large $j$. Using properties (i) and (ii),  we
have that
\begin{eqnarray*}
\limsup_{n\rightarrow\infty}\frac{f_{n}(x)}{g_n(x)}&=&\lim_{j\rightarrow\infty}\frac{f_{n_j}(x)}{g_{n_j}(x)}
\\&\leq&\lim_{j\rightarrow\infty}\frac{S_{n_j}f_\xi(x)+n_j\xi}{S_{n_j}g_\xi(x)-n_j\xi}
\\
&=&\lim_{j\rightarrow\infty} \frac{\int f_\xi\,
\mathrm{d}\delta_{x,n_j}+\xi}{\int g_\xi\,
\mathrm{d}\delta_{x,n_j}-\xi}\\
&=&\frac{\int f_\xi\,\mathrm{d}\mu+\xi}{\int
g_\xi\,\mathrm{d}\mu-\xi}.
\end{eqnarray*}
Letting $\xi\rightarrow 0$, by (\ref{approx}) we have
\[
\limsup_{n\rightarrow\infty}\frac{f_{n}(x)}{g_{n}(x)}\leq\frac{
\mathcal{F}_*(\mu)}{\mathcal{G}_*(\mu)}.
\]
This implies that
\[\sup_{x\in
E_{\Phi,\Psi}(\alpha)}\limsup_{n\rightarrow\infty}\frac{f_n(x)}{g_n(x)}\le\sup
\Big\{ \frac{\mathcal{F}_*(\mu)}{\mathcal{G}_*(\mu)}:\ \mu\in
\mathcal{M}_T\ \hbox{and}\
\frac{\Phi_*(\mu)}{\Psi_*(\mu)}=\alpha\Big\}.\]

To prove the reverse inequality, note that the set of
$T$-invariant measure $\mu$ such that
$\frac{\Phi_*(\mu)}{\Psi_*(\mu)}=\alpha$ is a compact subset of
$\mathcal{M}_T$.  Choose such a measure $\mu$ so that
$$\frac{\mathcal{F}_*(\mu)}{\mathcal{G}_*(\mu)}=\sup
\Big\{ \frac{\mathcal{F}_*(\mu)}{\mathcal{G}_*(\mu)}:\ \mu\in
\mathcal{M}_T\ \hbox{and}\
\frac{\Phi_*(\mu)}{\Psi_*(\mu)}=\alpha\Big\}.$$ Fix a small number
$\xi>0$, and $\varphi_\xi$ and $\psi_\xi$ are continuous functions
given by (\ref{Def-aap}) approximating $\Phi$ and $\Psi$
respectively.  By the specification property of the TDS $(X, T)$,
there exists $x_0\in G_\mu$, i.e., $\delta_{x_0,n}\rightarrow\mu$
as $n\rightarrow\infty$. Hence,
\[
\lim_{n\rightarrow\infty}\frac 1 n S_n
\varphi_\xi(x_0)=\lim_{n\rightarrow\infty} \int \varphi_\xi\,
\mathrm{d}\delta_{x_0,n} =\int  \varphi_\xi\, \mathrm{d} \mu
\]
and
\[
\lim_{n\rightarrow\infty}\frac 1 n S_n
\psi_\xi(x_0)=\lim_{n\rightarrow\infty} \int \psi_\xi\,
\mathrm{d}\delta_{x_0,n} =\int  \psi_\xi\, \mathrm{d} \mu.
\]
Therefore, for all sufficiently large $n$ we have that
\begin{eqnarray*}
\frac{\varphi_n(x_0)}{\psi_n(x_0)}\le
\frac{S_n\varphi_\xi(x_0)+\xi}{S_n\psi_\xi(x_0)-\xi}\le \frac{\int
\varphi_\xi\,\mathrm{d}\mu+2\xi}{\int
\psi_\xi\,\mathrm{d}\mu-2\xi}\le
\frac{\Phi_*(\mu)+3\xi}{\Psi_*(\mu)-3\xi}.
\end{eqnarray*}
Similarly, for all sufficiently large $n$ we can prove that
\[
\frac{\varphi_n(x_0)}{\psi_n(x_0)}\ge\frac{\Phi_*(\mu)-3\xi}{\Psi_*(\mu)+3\xi}.
\]
Hence,
\[
\lim_{n\rightarrow\infty}\frac{\varphi_n(x_0)}{\psi_n(x_0)}=\frac{\Phi_*(\mu)}{\Psi_*(\mu)}=\alpha.
\]
This means that $x_0\in E_{\Phi,\Psi}(\alpha)$. On the other hand,
by (\ref{ds34}) we have that
\begin{eqnarray*}
\limsup_{n\rightarrow\infty}\frac{f_n(x_0)}{g_n(x_0)}&\geq&
\limsup_{n\rightarrow\infty}\frac{S_n f_\xi(x_0)-n\xi}{S_n g_\xi(x_0)+n\xi}\\
&=&\limsup_{n\rightarrow\infty}\frac{\int f_\xi\,
\mathrm{d}\delta_{x_0,n}-\xi}{\int g_\xi\,
\mathrm{d}\delta_{x_0,n}+\xi}\\
&=&\frac{\int f_\xi\, \mathrm{d}\mu-\xi}{\int g_\xi\,
\mathrm{d}\mu+\xi}.
\end{eqnarray*}
Letting $\xi\rightarrow 0$, by (\ref{approx}) we have
\[
\limsup_{n\rightarrow\infty}\frac{f_n(x_0)}{g_n(x_0)}\geq\frac{\mathcal{F}_*(\mu)}{\mathcal{G}_*(\mu)}.
\]
It follows that $$\sup_{x\in
E_{\Phi,\Psi}(\alpha)}\limsup_{n\rightarrow\infty}\frac{f_n(x)}{g_n(x)}\ge\sup
\Big\{ \frac{\mathcal{F}_*(\mu)}{\mathcal{G}_*(\mu)}:\ \mu\in
\mathcal{M}_T\ \hbox{and}\
\frac{\Phi_*(\mu)}{\Psi_*(\mu)}=\alpha\Big\}.$$
This finishes the proof of Theorem \ref{thm.e}.
\end{proof}

Next we turn to prove Theorem \ref{thm.f}, some of the methods was
used by Thompson in \cite{th} for the multifractal analysis of
Birkhoff averages of a continuous function, in that paper he
proved that the irregular set is either empty or carries full
topological pressure when the system has specification property.
His ideas is originally due to the work of Takens and Verbitskiy
\cite{tv1,tv2} and
 Chen {\it et al.} \cite{chen}.
\begin{proof}[Proof of Theorem \ref{thm.f}] We first prove that
\begin{eqnarray}\label{thm.f.1}
\sup\Big \{\frac{\mathcal{F}_{*}(\mu)}{\mathcal{G}_{*}(\mu)}:\
\mu\in \mathcal{E}_T\Big\}=\sup
\Big\{\frac{\mathcal{F}_{*}(\mu)}{\mathcal{G}_{*}(\mu)}:\ \mu\in
\mathcal{M}_T\Big\}.
\end{eqnarray}
Since $\mathcal{E}_T\subset \mathcal{M}_T$, it is clear that
\begin{eqnarray*}
\sup\Big \{\frac{\mathcal{F}_{*}(\mu)}{\mathcal{G}_{*}(\mu)}:\
\mu\in \mathcal{E}_T\Big\}\le\sup
\Big\{\frac{\mathcal{F}_{*}(\mu)}{\mathcal{G}_{*}(\mu)}:\ \mu\in
\mathcal{M}_T\Big\}.
\end{eqnarray*}
To prove the reverse inequality, since the map $\mu\mapsto \frac{\mathcal{F}_{*}(\mu)}{\mathcal{G}_{*}(\mu)}$ is continuous on the space $\mathcal{M}_T$ we can choose a $T$-invariant measure
$\mu\in \mathcal{M}_T$ such that
\[
\frac{\mathcal{F}_{*}(\mu)}{\mathcal{G}_{*}(\mu)}=\sup
\Big\{\frac{\mathcal{F}_{*}(\mu)}{\mathcal{G}_{*}(\mu)}:\ \mu\in
\mathcal{M}_T\Big\}.
\]
By (3) of Theorem \ref{fhthm}, using the ergodic decomposition
theorem (see \cite{k10}) we have that
$$\mathcal{F}_*(\mu)=\int_{\mathcal{E}_T}\mathcal{F}_*(m)\,\mathrm{d}\tau(m)~~\text{and}~~
\mathcal{G}_*(\mu)=\int_{\mathcal{E}_T}\mathcal{G}_*(m)\,\mathrm{d}\tau(m),$$
where $\tau$ is a probability measure on the space $\mathcal{M}_T$
such that $\tau(\mathcal{E}_T)=1$. Let
$\lambda:=\frac{\mathcal{F}_*(\mu)}{\mathcal{G}_*(\mu)}$, we have
\begin{eqnarray*}
0&=&\int_{\mathcal{E}_T}\mathcal{F}_*(m)-\lambda\mathcal{G}_*(m)\,\mathrm{d}\tau(m)\\
&=&\int_{\mathcal{E}_T}\mathcal{G}_*(m)\Big(\frac{\mathcal{F}_*(m)}{\mathcal{G}_*(m)}-\lambda\Big)\,\mathrm{d}\tau(m).
\end{eqnarray*}
Since $\mathcal{G}_*(m)\ge \sigma>0$ and
$\frac{\mathcal{F}_*(m)}{\mathcal{G}_*(m)}\le\lambda$ for each
$m\in \mathcal{E}_T$, for $\tau-$a.e. $m\in \mathcal{E}_T$ we have
\[
\lambda= \frac{\mathcal{F}_*(m)}{\mathcal{G}_*(m)}.
\]
Thus formula (\ref{thm.f.1}) immediately follows.

Next, we prove the second equality of the theorem. It is clear
that
$$\sup \{\frac{\mathcal{F}_{*}(\mu)}{\mathcal{G}_{*}(\mu)}:\ \mu\in
\mathcal{M}_T\}\ge\sup
\{\frac{\mathcal{F}_{*}(\mu)}{\mathcal{G}_{*}(\mu)}:\ \mu\in
\bigcup_{x\in\widehat{X}_{\Phi,\Psi}}\mathcal{V}(x)\},$$ since
every measure $\mu\in \bigcup_{x\in
\widehat{X}_{\Phi,\Psi}}\mathcal{V}(x)$ is $T$-invariant.

To prove the reverse inequality,  by formula (\ref{thm.f.1}) it
suffices to prove that $\mathcal{E}_T\subset \bigcup_{x\in
\widehat{X}_{\Phi,\Psi}}\mathcal{V}(x)$.

Given $\mu_1\in \mathcal{E}_T$, there must exists some
$T$-invariant ergodic measure $\mu_2$ satisfying
$$\frac{\Phi_*(\mu_1)}{\Psi_*(\mu_1)}\neq\frac{\Phi_*(\mu_2)}{\Psi_*(\mu_2)},$$ since
$\inf\limits_{\mu\in\mathcal{M}_T}\frac{\Phi_*(\mu)}{\Psi_*(\mu)}<\sup\limits_{\mu\in\mathcal{M}_T}\frac{\Phi_*(\mu)}{\Psi_*(\mu)}$.
For $i
= 1, 2$, choose a point $x_i$ satisfies $$
\lim_{n\rightarrow\infty}\frac{\varphi_n
(x_i)}{\psi_n(x_i)}=
\frac{\Phi_*(\mu_i)}{\Psi_*(\mu_i)}.$$  Let $m_k := m(\epsilon/2^k)$ be as in the definition of
specification and $N_k$ be a sequence of integers chosen to grow
to infinity sufficiently rapidly that $N_{k+1} > \exp{
\sum_{i=1}^{k}(N_i + m_i)}$. We define a sequence of points
$\{z_i\}_{i\ge 1}\subset X$ inductively using the specification
property. For $x,y\in X$ and $n\in \mathbb{N}$, define a new metric on $X$ as follows
$$d_n(x, y) := \max\bigr\{d(T^ix, T^iy) : i = 0,1,\cdots,
n-1\bigr\}.$$ And the dynamical ball centered at $x$ of radius $r$
and length $n$ is denoted by $B_n(x,r):=\{y\in X:\ d_n(x,y)<r\}$.
Let $t_1 = N_1$,  $t_k = t_{k-1}+m_k+N_k$ for $k\geq 2$ and
$\rho(k):=(k+1)(\mathrm{mod}\ 2)+1$. Let $z_1= x_1$. Let $z_2$
satisfy
\[
d_{N_1}(z_2, z_1) < \frac \epsilon 4\  \hbox{and}\
d_{N_2}(T^{N_1+m_2}z_2 , x_2) <  \frac \epsilon 4.
\]
For $k>2$, let $z_k$ satisfy
\[
d_{t_{k-1}}(z_k,z_{k-1})< \frac{\epsilon}{2^k}\  \hbox{and}\
d_{N_k}(T^{t_{k-1}+m_k}z_k , x_{\rho(k)})<\frac{\epsilon}{2^k}.
\]
Note that if $q\in \overline{B}_{t_k}(z_k, \epsilon / 2^{k-1})$,
then
\begin{eqnarray*}
d_{t_{k-1}}(q,z_{k-1})&\leq&
d_{t_{k-1}}(q,z_k)+d_{t_{k-1}}(z_k,z_{k-1})\\
&\le&\frac{\epsilon}{2^{k-1}}+\frac{\epsilon}{2^k}<\frac{\epsilon}{2^{k-2}},
\end{eqnarray*}
and thus $\overline{B}_{t_k}(z_k, \epsilon /
2^{k-1})\subset\overline{B}_{t_{k-1}}(z_{k-1}, \epsilon /
2^{k-2})$. Hence, we can define a point by
\[
p:=\bigcap_{k\geq 1}\overline{B}_{t_k}(z_k, \epsilon / 2^{k-1}).
\]
For each continuous function $\phi\in C(X)$, since the orbit of
$p$ alternates between approximating increasingly long orbit
segments of $x_1$ and $x_2$, we can show that
\begin{eqnarray}\label{thm.f.2}
\frac{1}{t_k}S_{t_k}\phi(p)\rightarrow \int \phi\,
\mathrm{d}\mu_{\rho(k)}~(k\rightarrow\infty).\end{eqnarray} Thus
$\delta_{p,t_{2k+1}}\rightarrow \mu_1$,
$\delta_{p,t_{2k}}\rightarrow \mu_2$ as $k\rightarrow\infty$ and,
so $\mu_i\in \mathcal{V}(p)$ for $i=1,2$. To complete the proof of the second equality of 
this theorem, it is left to show that
$$p\in \widehat{X}_{\Phi,\Psi}.$$
Indeed, fix a small number $\xi>0$, by the definition of
asymptotically additive potentials there exist continuous
functions $\varphi_\xi$ and $\psi_\xi$ approximating $\Phi$ and
$\Psi$ respectively. Applying (\ref{thm.f.2}) for continuous
functions $\varphi_\xi$ and $\psi_\xi$, for all sufficiently large
$k$ we have
\begin{eqnarray*}
\frac{\varphi_{t_{2k}}(p)}{\psi_{t_{2k}}(p)}&\le&
\frac{S_{t_{2k}}\varphi_\xi(p)+t_{2k}\xi}{S_{t_{2k}}\psi_\xi(p)-t_{2k}\xi}=\frac{\int
\varphi_\xi\,\mathrm{d} \delta_{p,t_{2k}}+\xi}{\int
\psi_\xi\,\mathrm{d} \delta_{p,t_{2k}}-\xi}\\
&\le&\frac{\int\varphi_\xi\,\mathrm{d} \mu_2+2\xi
}{\int\psi_\xi\,\mathrm{d} \mu_2-2\xi}\le
\frac{\Phi_*(\mu_2)+3\xi}{\Psi_*(\mu_2)-3\xi}.
\end{eqnarray*}
Similarly, we can prove that
\[
\frac{\varphi_{t_{2k}}(p)}{\psi_{t_{2k}}(p)}\ge
\frac{\Phi_*(\mu_2)-3\xi}{\Psi_*(\mu_2)+3\xi}.
\]
Hence,
\[
\lim_{k\rightarrow\infty}\frac{\varphi_{t_{2k}}(p)}{\psi_{t_{2k}}(p)}=\frac{\Phi_*(\mu_2)}{\Psi_*(\mu_2)}.
\]
By the same arguments, we can prove that
\[
\lim_{k\rightarrow\infty}\frac{\varphi_{t_{2k+1}}(p)}{\psi_{t_{2k+1}}(p)}=\frac{\Phi_*(\mu_1)}{\Psi_*(\mu_1)}.
\]
This implies that $p\in \widehat{X}_{\Phi,\Psi}$. Hence,
$$\mathcal{E}_T\subset \bigcup_{x\in \widehat{X}_{\Phi,\Psi}}\mathcal{V}(x).$$
This completes the proof of the second equality of Theorem \ref{thm.f}.

To prove the last equality, by the Proposition \ref{PthmF} below and \eqref{thm.f.1} we have
$$
\sup_{x\in \widehat{X}_{\Phi,\Psi}}\limsup_{n\to\infty}\frac{f_n(x)}{g_n(x)}\le \sup
\Big\{\frac{\mathcal{F}_{*}(\mu)}{\mathcal{G}_{*}(\mu)}:\ \mu\in
\mathcal{E}_T\Big\}.
$$
Next we prove the reverse inequality. Pick a $T$-invariant ergodic measure $\widetilde{\mu}_1$  such that
$$\frac{\mathcal{F}_*(\widetilde{\mu}_1)}{\mathcal{G}_*(\widetilde{\mu}_1)}=\sup
\Big\{\frac{\mathcal{F}_{*}(\mu)}{\mathcal{G}_{*}(\mu)}:\ \mu\in
\mathcal{E}_T\Big\}.$$
Then we choose another $T$-invariant ergodic measure $\widetilde{\mu}_2$ so that
$$\frac{\Phi_*(\widetilde{\mu}_1)}{\Psi_*(\widetilde{\mu}_1)}\neq\frac{\Phi_*(\widetilde{\mu}_2)}{\Psi_*(\widetilde{\mu}_2)}.$$
Repeating the arguments in the proof of the second equality, we can find a point $\widetilde{p}\in \widehat{X}_{\Phi,\Psi}$
such that $\delta_{\widetilde{p},t_{2k+1}}\rightarrow \widetilde{\mu}_1$ and
$\delta_{\widetilde{p},t_{2k}}\rightarrow \widetilde{\mu}_2$ as $k\rightarrow\infty$.  Fix a small number $\xi>0$, by the definition of
asymptotically additive potentials there exist continuous
functions $f_\xi$ and $g_\xi$ approximating $\mathcal{F}$ and
$\mathcal{G}$ respectively. Applying \eqref{thm.f.2} for continuous
functions $f_\xi$ and $g_\xi$, for all sufficiently large
$k$ we have
\begin{eqnarray*}
\frac{f_{t_{2k+1}}(\widetilde{p})}{g_{t_{2k+1}}(\widetilde{p})}&\le&
\frac{S_{t_{2k+1}}f_\xi(\widetilde{p})+t_{2k+1}\xi}{S_{t_{2k+1}}g_\xi(\widetilde{p})-t_{2k+1}\xi}=\frac{\int
f_\xi\,\mathrm{d} \delta_{\widetilde{p},t_{2k+1}}+\xi}{\int
g_\xi\,\mathrm{d} \delta_{\widetilde{p},t_{2k+1}}-\xi}\\
&\le&\frac{\int f_\xi\,\mathrm{d} \widetilde{\mu}_1+2\xi
}{\int g_\xi\,\mathrm{d} \widetilde{\mu}_1-2\xi}\le
\frac{\mathcal{F}_*(\widetilde{\mu}_1)+3\xi}{\mathcal{G}_*(\widetilde{\mu}_1)-3\xi}.
\end{eqnarray*}
Similarly, we can prove that
\[
\frac{f_{t_{2k+1}}(\widetilde{p})}{g_{t_{2k+1}}(\widetilde{p})}\ge
\frac{\mathcal{F}_*(\widetilde{\mu}_1)-3\xi}{\mathcal{G}_*(\widetilde{\mu}_1)+3\xi}.
\]
Hence,
\[
\lim_{k\rightarrow\infty}\frac{f_{t_{2k+1}}(\widetilde{p})}{g_{t_{2k+1}}(\widetilde{p})}=\frac{\mathcal{F}_*(\widetilde{\mu}_1)}{\mathcal{G}_*(\widetilde{\mu}_1)}.
\]
This implies that
$$
\sup_{x\in \widehat{X}_{\Phi,\Psi}}\limsup_{n\to\infty}\frac{f_n(x)}{g_n(x)}\ge \sup
\Big\{\frac{\mathcal{F}_{*}(\mu)}{\mathcal{G}_{*}(\mu)}:\ \mu\in
\mathcal{E}_T\Big\}.
$$
The proof of this theorem is completed.
\end{proof}

\begin{proposition}\label{PthmF} Let $\mathcal{F}=\{f_n\}_{n\ge 1}$ and $\mathcal{G}=\{g_n\}_{n\ge 1}$ be two AAPs. Assume that $\mathcal{G}$ satisfies \eqref{denom}, then $$\sup_{x\in X}\limsup_{n\to\infty}\frac{f_n(x)}{g_n(x)}=\sup_{x\in \mathcal{R}}\lim_{n\to\infty}\frac{f_n(x)}{g_n(x)}=
\limsup_{n\to\infty}\max_{x\in X}\frac{f_n(x)}{g_n(x)}=
\sup\{\frac{\mathcal{F}_*(\mu)}{\mathcal{G}_*(\mu)}
\mid \mu\in \mathcal{M}_T\}$$
where $\mathcal{R}=\{x\in X: \lim\limits_{n\to\infty}\frac{f_n(x)}{g_n(x)}~\text{exists}\}$.
\end{proposition}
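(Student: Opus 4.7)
The plan is to establish the four-term chain $(B)\le(A)\le(C)\le(D)\le(B)$, where $(A),(B),(C),(D)$ denote the four quantities in the statement in the order written; once this loop closes, equality throughout follows. The two inequalities $(B)\le(A)$ and $(A)\le(C)$ are immediate: the first because $\mathcal{R}\subseteq X$ and an ordinary limit cannot exceed the corresponding $\limsup$, and the second because $f_n(x)/g_n(x)\le\max_{y\in X}f_n(y)/g_n(y)$ for every $x$ and $n$, so $\limsup_n f_n(x)/g_n(x)\le\limsup_n\max_y f_n(y)/g_n(y)$ for every $x$, and then a $\sup_x$ on the left preserves the bound.

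For $(C)\le(D)$, I would choose $n_k\to\infty$ and $x_k\in X$ with $f_{n_k}(x_k)/g_{n_k}(x_k)\to(C)$ and, by weak-$*$ compactness of $\mathcal{M}_T$, pass to a further subsequence along which the empirical measures $\delta_{x_k,n_k}$ converge to some $\mu\in\mathcal{M}_T$. Given $\xi>0$, let $f_\xi,g_\xi\in C(X)$ be the continuous approximants furnished by \eqref{Def-aap} for $\mathcal{F}$ and $\mathcal{G}$. For all large $k$, $|\tfrac{1}{n_k}f_{n_k}(x_k)-\tfrac{1}{n_k}S_{n_k}f_\xi(x_k)|<\xi$ and similarly for $g$, while $\tfrac{1}{n_k}S_{n_k}f_\xi(x_k)=\int f_\xi\,\mathrm{d}\delta_{x_k,n_k}\to\int f_\xi\,\mathrm{d}\mu$ and likewise for $g_\xi$. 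The hypothesis \eqref{denom} guarantees $\int g_\xi\,\mathrm{d}\mu\ge\sigma-\xi>0$ once $\xi$ is small, so after letting $k\to\infty$ one obtains
\[
(C)\le\frac{\int f_\xi\,\mathrm{d}\mu+\xi}{\int g_\xi\,\mathrm{d}\mu-\xi}.
\]
Letting $\xi\to0$ and using $\int f_\xi\,\mathrm{d}\mu\to\mathcal{F}_*(\mu)$ and $\int g_\xi\,\mathrm{d}\mu\to\mathcal{G}_*(\mu)$ (a direct consequence of \eqref{Def-aap}) yields $(C)\le\mathcal{F}_*(\mu)/\mathcal{G}_*(\mu)\le(D)$.

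For the closing inequality $(D)\le(B)$, I would reuse the ergodic-decomposition argument already deployed in the proof of Theorem~\ref{thm.f}. The map $\mu\mapsto\mathcal{F}_*(\mu)/\mathcal{G}_*(\mu)$ is continuous on the compact set $\mathcal{M}_T$ by Theorem~\ref{fhthm}(2) together with \eqref{denom}, so the supremum $L:=(D)$ is attained at some $\mu^*\in\mathcal{M}_T$. Writing the ergodic decomposition $\mu^*=\int_{\mathcal{E}_T}m\,\mathrm{d}\tau(m)$ and applying Theorem~\ref{fhthm}(3) to both $\mathcal{F}$ and $\mathcal{G}$ gives
\[
0=\mathcal{F}_*(\mu^*)-L\,\mathcal{G}_*(\mu^*)=\int_{\mathcal{E}_T}\mathcal{G}_*(m)\Big(\frac{\mathcal{F}_*(m)}{\mathcal{G}_*(m)}-L\Big)\,\mathrm{d}\tau(m).
\]
Because $\mathcal{G}_*(m)\ge\sigma>0$ and $\mathcal{F}_*(m)/\mathcal{G}_*(m)\le L$ for every $m\in\mathcal{E}_T$, the integrand is nonpositive and must vanish $\tau$-almost everywhere. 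Fixing an ergodic $m_0$ from this full-$\tau$-measure set, Theorem~\ref{fhthm}(1) supplies an $m_0$-a.e. point $x_0$ at which $\tfrac{1}{n}f_n(x_0)\to\mathcal{F}_*(m_0)$ and $\tfrac{1}{n}g_n(x_0)\to\mathcal{G}_*(m_0)$; using $\mathcal{G}_*(m_0)\ge\sigma>0$ the ratio $f_n(x_0)/g_n(x_0)$ converges to $L$, so $x_0\in\mathcal{R}$ and $(B)\ge L=(D)$.

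The principal obstacle is the $\xi\to0$ step in $(C)\le(D)$: since the quantity of interest is a ratio rather than a single Birkhoff average, the $O(\xi)$ errors in numerator and denominator must be tracked simultaneously, and it is precisely the uniform lower bound \eqref{denom} that prevents the denominator from collapsing to zero while $\xi$ is reduced. The remaining ingredients are standard consequences of the asymptotic additivity encoded in Theorem~\ref{fhthm}, so no additional dynamical hypothesis (e.g.\ specification) is needed for this proposition.
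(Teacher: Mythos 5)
Your proposal is correct and follows essentially the same route as the paper: the trivial chain $(B)\le(A)\le(C)$, then $(C)\le(D)$ via weak$^*$ limits of empirical measures at (near-)maximizers together with the $\xi$-approximants from \eqref{Def-aap} and the lower bound \eqref{denom}, and finally $(D)\le(B)$ by reducing to ergodic measures and using generic points from Theorem \ref{fhthm}(1). The only cosmetic difference is that the paper cites the identity \eqref{thm.f.1} (already established inside the proof of Theorem \ref{thm.f}) rather than re-deriving the ergodic-decomposition step as you do.
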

\begin{proof}It is clear that
$$\sup_{x\in \mathcal{R}}\lim_{n\to\infty}\frac{f_n(x)}{g_n(x)}\le \sup_{x\in X}\limsup_{n\to\infty}\frac{f_n(x)}{g_n(x)}\le \limsup_{n\to\infty}\max_{x\in X}\frac{f_n(x)}{g_n(x)}.$$
Given a $T$-invariant ergodic measure $\mu$, by Theorem \ref{fhthm} there exist a point $x$ such that
$$\lim_{n\to\infty}\frac{f_n(x)}{g_n(x)}=\frac{\mathcal{F}_*(\mu)}{\mathcal{G_*(\mu)}}.$$
This together with \eqref{thm.f.1} imply that
$$\sup_{x\in \mathcal{R}}\lim_{n\to\infty}\frac{f_n(x)}{g_n(x)}\ge\sup\{\frac{\mathcal{F}_*(\mu)}{\mathcal{G}_*(\mu)}
\mid \mu\in \mathcal{M}_T\}.$$

On the other hand, for each $n$ choose a point $x_n\in X$ such that
$$\frac{f_n(x_n)}{g_n(x_n)}=\max_{x\in X}\frac{f_n(x)}{g_n(x)}.$$
Put $\mu_n=\frac1n \sum_{i=0}^{n-1}\delta_{T^ix_n}$, and let $\mu$ be a weak$^*$ limit point of $\{\mu_n\}_{n\ge 1}$. Without loss of generality, assume that
$\mu_n\to \mu$ as $n\to\infty$.  Fix a small number $\xi>0$, let $f_\xi$ and $g_\xi$ be the continuous
functions in the definition of
asymptotically additive potentials that  approximating $\mathcal{F}$ and
$\mathcal{G}$ respectively. Hence, for all sufficiently large $n$ we have
\begin{eqnarray*}
\frac{f_n(x_n)}{g_n(x_n)}\le \frac{\frac1n S_nf_\xi(x_n)+\xi}{\frac1n S_ng_\xi(x_n)-\xi}=\frac{\int f_\xi\mathrm{d}\mu_n+\xi}{\int g_\xi\mathrm{d}\mu_n-\xi},
\end{eqnarray*}
and this yields that
$$\limsup_{n\to\infty}\max_{x\in X}\frac{f_n(x)}{g_n(x)}\le\lim_{n\to\infty}\frac{\int f_\xi\mathrm{d}\mu_n+\xi}{\int g_\xi\mathrm{d}\mu_n-\xi}=\frac{\int f_\xi\mathrm{d}\mu+\xi}{\int g_\xi\mathrm{d}\mu-\xi}.$$
Letting  $\xi\to 0$ yields that
$$\limsup_{n\to\infty}\max_{x\in X}\frac{f_n(x)}{g_n(x)}\le\frac{\mathcal{F}_*(\mu)}{\mathcal{G}_*(\mu)}.
$$
This yields the desired result.
\end{proof}

\section{Application to suspension flows} This section provides
applications of our main results to suspension flows. Let $T :
X\rightarrow X$ be a homeomorphism of a compact metric space $X$
with metric $d$, and $\tau: X \rightarrow (0,\infty)$ is a
continuous \emph{roof function}. The \emph{suspension space} is
defined as follows: \[ X_\tau := \{(x, s)\in X\times\mathbb{R}  :
0\leq s \leq\tau (x)\},
\]
where $(x, \tau (x))$ is identified with $(T(x), 0)$ for all $x$.
There is a natural topology on $X_\tau$ which makes $X_\tau$ a
compact topological space. This topology is induced by a distance
introduced by Bowen and Walters in \cite{bw} (see the appendix in
\cite{bs} for details).

The \emph{suspension flow} $\Theta=(\theta_t)_t$ on $X_\tau$ is
defined by $\theta_t(x, s) = (x, s + t)$. For a continuous
function $\Phi:X_\tau\rightarrow \mathbb{R}$, we associate the
function $\varphi:X\rightarrow \mathbb{R}$ by
\begin{eqnarray} \label{ass-func}\varphi(x) = \int_{0}^{\tau(x)}
\Phi(x, t)\,\mathrm{d}t.\end{eqnarray} Since the roof function
$\tau$ is continuous, so is the function $\varphi$. If $x\in X$
and $s\in [0, \tau(x)]$, we have (see \cite[Lemma 5.3]{th})
\begin{eqnarray}\label{flow}
\liminf_{T\rightarrow\infty}\frac 1 T \int_{0}^{T}
\Phi(\theta_t(x, s))\,\mathrm{d}t &=& \liminf_{
n\rightarrow\infty}\frac{ S_n\varphi(x)}{ S_n\tau(x)}.
\end{eqnarray}
The above formula remains true if we replace $\liminf$ by
$\limsup$.

 For any $T$-invariant measure $\mu\in
\mathcal{M}_T$, we define the measure $\mu_\tau$ on the suspension
space $X_\tau$ by
\[
\int_{X_\tau} \Phi \,\mathrm{d}\mu_\tau
=\frac{\int_X\varphi\,\mathrm{d}\mu}{\int_X\tau\,\mathrm{d}\mu}
\]
for all continuous function $\Phi\in C(X_\tau)$, where $\varphi$
is defined as (\ref{ass-func}). It is well-known that the measure
$\mu_\tau$ is $\Theta-$invariant, i.e.,
$\mu(\theta_{t}^{-1}A)=\mu(A)$ for all $t\geq 0$ and measurable
sets $A$. The map $\mathcal{L}: \mathcal{M}_T \rightarrow
\mathcal{M}_\Theta$ given by $\mu\mapsto\mu_\tau$ is a bijection,
where $\mathcal{M}_\Theta$ is the space of all $\Theta-$invariant
measures on the suspension space $X_\tau$.

For any real number $\alpha$, we consider the level set associated
to a continuous $\Phi:X_\tau\rightarrow \mathbb{R}$ as follows:
\begin{eqnarray*}K(\Phi,\alpha) := \Big\{(x, s)\in X_\tau :
\lim_{T\rightarrow\infty}\frac 1 T \int_{0}^{T} \Phi(\theta_t(x,
s))\,\mathrm{d}t = \alpha\Big\}.
\end{eqnarray*}

\begin{theorem}\label{flow-erg}Let  $T:X\rightarrow
X$  be a homeomorphism on a compact metric space $(X,d)$ with the
specification property, $\tau: X \rightarrow (0,\infty)$ a
continuous roof function, and $(X_\tau, \Theta)$ the corresponding
suspension flow over $X$. Let $\Phi, H\in C( X_\tau)$, for each
$\alpha$ with $K(\Phi,\alpha)\neq \emptyset$ we have
\begin{eqnarray*}
\begin{aligned}
\sup_{(x,s)\in K(\Phi,\alpha)}\limsup_{T\rightarrow\infty} \frac 1
T\int_{0}^{T} H(\theta_t(x,s))\,\mathrm{d}t=\sup \Big\{ \int
H\,\mathrm{d}\mu_\tau:\mu_\tau\in \mathcal{M}_\Psi,\ \int \Phi\,
\mathrm{d}\mu_\tau=\alpha\Big\}\\
=\sup \Big\{ \int H\,\mathrm{d}\mu_\tau:
\mu_\tau\in\bigcup_{(x,s)\in
K(\Phi,\alpha)}\mathcal{V}((x,s))\Big\}
\end{aligned}
\end{eqnarray*}
where $\mathcal{V}((x,s))$ is the set of limits point of the
sequence
$\delta_{(x,s),T}:=\frac{1}{T}\int_{0}^{T}\delta_{\theta_t(x,s)}\,\mathrm{d}t$.
\end{theorem}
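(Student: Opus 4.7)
The plan is to reduce the statement to Theorem \ref{thm.e} applied to the base system $(X,T)$. Define the continuous functions on $X$
\[
\varphi(x) := \int_0^{\tau(x)} \Phi(x,t)\,\mathrm{d}t, \qquad h(x) := \int_0^{\tau(x)} H(x,t)\,\mathrm{d}t,
\]
so that $\{S_n\varphi\}_{n\ge 1}$, $\{S_nh\}_{n\ge 1}$ and $\{S_n\tau\}_{n\ge 1}$ are (trivially) asymptotically additive potentials on $X$. Since $\tau$ is continuous and strictly positive on the compact space $X$, one has $\tau(x)\ge \sigma:=\min_X\tau>0$, so $\{S_n\tau\}$ satisfies condition (\ref{denom}). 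Moreover, $(X,T)$ inherits the specification property by assumption.

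First, I would use (\ref{flow}), together with the remark that the same identity holds with $\limsup$ in place of $\liminf$, to translate the flow quantities into base quantities. This gives $(x,s)\in K(\Phi,\alpha)$ if and only if $x\in E_{\{S_n\varphi\},\{S_n\tau\}}(\alpha)$, and the equivalence is independent of $s\in[0,\tau(x)]$. Likewise
\[
\limsup_{T\rightarrow\infty}\frac{1}{T}\int_0^T H(\theta_t(x,s))\,\mathrm{d}t=\limsup_{n\rightarrow\infty}\frac{S_nh(x)}{S_n\tau(x)}.
\]
Using the bijection $\mathcal{L}:\mathcal{M}_T\to\mathcal{M}_\Theta$, $\mu\mapsto\mu_\tau$, we rewrite the second supremum in the statement as
\[
\sup\Big\{\frac{\int h\,\mathrm{d}\mu}{\int\tau\,\mathrm{d}\mu}\,:\,\mu\in\mathcal{M}_T,\ \frac{\int\varphi\,\mathrm{d}\mu}{\int\tau\,\mathrm{d}\mu}=\alpha\Big\}.
\]
Applying Theorem \ref{thm.e} to $(X,T)$ with $\mathcal{F}=\{S_nh\}$, $\mathcal{G}=\Psi=\{S_n\tau\}$ and $\Phi=\{S_n\varphi\}$ then immediately identifies this with the first supremum in the theorem.

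For the third supremum (over $\mathcal{V}((x,s))$), the key step is to establish the correspondence $\mathcal{V}((x,s))=\mathcal{L}(\mathcal{V}(x))$ for every $(x,s)\in X_\tau$. This uses the identity
\[
\int_0^{S_n\tau(x)} F(\theta_t(x,0))\,\mathrm{d}t=S_n f(x),\qquad f(x):=\int_0^{\tau(x)}F(x,t)\,\mathrm{d}t
\]
valid for any continuous test function $F$ on $X_\tau$. Given a sequence $T_k\to\infty$ along which $\delta_{(x,s),T_k}\to\mu_\tau$, I would choose $n_k$ with $T_k=S_{n_k}\tau(x)+O(1)$, pass to a subsequence so that $\delta_{x,n_k}\to\mu\in\mathcal{V}(x)$, and then show via the above identity (plus the bounded correction coming from the $O(1)$ tail and the shift by $s$) that $\mathcal{L}(\mu)=\mu_\tau$. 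Conversely, given $\mu\in\mathcal{V}(x)$ along $\{n_k\}$, taking $T_k:=S_{n_k}\tau(x)$ produces a flow empirical limit equal to $\mathcal{L}(\mu)$. Combined with the dictionary already established, this yields equality of the third supremum with the other two.

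The main obstacle is the clean implementation of this last correspondence: one must verify that the contribution of the terminal partial fiber (of total length at most $\max\tau$) and of the initial shift by $s$ is negligible in the weak-$*$ limit, uniformly over continuous test functions on $X_\tau$. Once this is in place, the whole theorem follows from Theorem \ref{thm.e} and the bijection $\mathcal{L}$, without any new dynamical input beyond the specification property of $(X,T)$.
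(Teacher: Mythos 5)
Your proposal is correct and follows essentially the same route as the paper: reduce to Theorem \ref{thm.e} with $\mathcal{F}=\{S_nh\}$, $\mathcal{G}=\Psi=\{S_n\tau\}$, $\Phi=\{S_n\varphi\}$, translate via (\ref{flow}) and the bijection $\mathcal{L}:\mu\mapsto\mu_\tau$, and match $\mathcal{V}((x,s))$ with $\mathcal{L}(\mathcal{V}(x))$. You actually spell out the empirical-measure correspondence (the $T_k=S_{n_k}\tau(x)+O(1)$ bookkeeping) in more detail than the paper, which dismisses it as ``a standard argument.''
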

\begin{proof}Let $\varphi, h\in C(X)$ be the continuous function on $X$ associated to $\Phi$ and $H$ respectively,
that is
\[
\varphi(x)= \int_{0}^{\tau(x)} \Phi(x,
t)\,\mathrm{d}t~~\text{and}~~ h(x)=\int_{0}^{\tau(x)} H(x,
t)\,\mathrm{d}t.
\]
For a real number $\alpha$ with $K(\Phi,\alpha)\neq \emptyset$,
let
$$\Gamma(\alpha):=\Big\{x \in X: \lim_{ n\rightarrow\infty}\frac{
S_n\varphi(x)}{ S_n\tau(x)}=\alpha \Big\}.$$ Using (\ref{flow}),
$x\in \Gamma(\alpha)$ if and only if $(x,s)\in K(\Phi,\alpha)$ for all $0\le s\le \tau(x)$.
Hence,
\[
\sup_{(x,s)\in K(\Phi,\alpha)}\limsup_{T\rightarrow\infty} \frac 1
T\int_{0}^{T} H(\theta_t(x,s))\,\mathrm{d}t=\sup_{x\in
\Gamma(\alpha)}\limsup_{ n\rightarrow\infty}\frac{ S_nh(x)}{
S_n\tau(x)}.
\]
In Theorem \ref{thm.e}, consider particular asymptotically
additive potentials $g_n(x)=\psi_n(x)=S_n\tau(x)$,
$f_n(x)=S_nh(x)$ and $\varphi_n(x)=S_n\varphi(x)$ for each $x\in
X$ and $n\in \mathbb{N}$. Then we have
\begin{eqnarray*}
\sup_{x\in \Gamma(\alpha)}\limsup_{ n\rightarrow\infty}\frac{
S_nh(x)}{ S_n\tau(x)}&=&\sup\Big\{ \frac{\int h\,\mathrm{d}\mu
}{\int \tau\,\mathrm{d}\mu}:\mu\in
\mathcal{M}_T~\text{and}~\frac{\int
\varphi\,\mathrm{d}\mu }{\int \tau\,\mathrm{d}\mu}=\alpha\Big\}\\
&=&\sup \Big\{ \int H\,\mathrm{d}\mu_\tau:\mu_\tau\in
\mathcal{M}_\Psi\ \hbox{and}\ \int \Phi\,
\mathrm{d}\mu_\tau=\alpha\Big\},
\end{eqnarray*}
the above last equality holds since
\[
\int H\,\mathrm{d}\mu_\tau= \frac{\int h\,\mathrm{d}\mu }{\int
\tau\,\mathrm{d}\mu},~\int \Phi\, \mathrm{d}\mu_\tau=\frac{\int
\varphi\,\mathrm{d}\mu }{\int \tau\,\mathrm{d}\mu}
\]
for some $\mu\in \mathcal{M}_T$ and the map $\mu\mapsto\mu_\tau$
is a bijection. To finish the proof of the theorem, by Theorem
\ref{thm.e} it suffices to prove that
\begin{eqnarray}\label{last}
\sup \Big\{ \int H\,\mathrm{d}\mu_\tau:
\mu_\tau\in\bigcup_{(x,s)\in
K(\Phi,\alpha)}\mathcal{V}((x,s))\Big\}=\sup\Big\{ \frac{\int
h\,\mathrm{d}\mu }{\int \tau\,\mathrm{d}\mu}:\mu\in\bigcup_{x\in
\Gamma(\alpha)} \mathcal{V}(x)\Big\}.
\end{eqnarray}
Indeed, for each $\mu\in \mathcal{V}(x)$ for some $x\in
\Gamma(\alpha)$, we know that $(x,s)\in K(\Phi,\alpha)$ for any
$0\le s<\tau(x)$. Furthermore, by (\ref{flow}) we know that the
corresponding measure $\mu_\tau$ is a limit point of the sequence
$\delta_{(x,s),T}$. Conversely, for each
$\mu_\tau\in\bigcup\limits_{(x,s)\in
K(\Phi,\alpha)}\mathcal{V}((x,s))$, by a standard argument we can
show that the corresponding measure $\mu$ is a limit point of the
sequence $\delta_{x,n}$. This observation yields (\ref{last}).
\end{proof}

For a continuous function $\Phi:X_\tau\rightarrow \mathbb{R}$ on
the suspension space, we consider its irregular set as follows:
\[
\widehat{X}_{\Phi}:=\Big\{(x,s)\in
X_{\tau}:\lim_{T\rightarrow\infty}\frac 1 T \int_{0}^{T}
\Phi(\theta_t(x, s))\mathrm{d}t~\text{does not exist} \Big\}.
\]

\begin{theorem}
Let  $T:X\rightarrow X$  be a homeomorphism on a compact metric
space $(X,d)$ with the specification property, $\tau: X
\rightarrow (0,\infty)$ a continuous roof function, and $(X_\tau,
\Theta)$ the corresponding suspension flow over $X$. Let $\Phi, H\in
C( X_\tau)$, if $$\inf_{\mu_\tau\in \mathcal{M}_\Theta}\int \Phi
\,\mathrm{d}\mu_\tau<\sup_{\mu_\tau\in \mathcal{M}_\Theta}\int
\Phi \,\mathrm{d}\mu_\tau$$ then we have
\begin{eqnarray*}
\sup \Big\{\int H\,\mathrm{d}\mu_\tau:\ \mu_\tau\in
\mathcal{M}_\Theta\Big\}&=&\sup \Big\{\int H\,\mathrm{d}\mu_\tau:\
\mu_\tau\in
\bigcup_{(x,s)\in\widehat{X}_{\Phi}}\mathcal{V}((x,s))\Big\}\\
&=&\sup_{(x,s)\in\widehat{X}_{\Phi}}\limsup_{T\to\infty} \frac 1
T\int_{0}^{T} H(\theta_t(x,s))\,\mathrm{d}t.
\end{eqnarray*}
\end{theorem}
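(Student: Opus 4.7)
The plan is to reduce this theorem to Theorem \ref{thm.f} on the base system $(X,T)$ by transferring everything through the standard correspondence between the suspension flow $(X_\tau,\Theta)$ and $(X,T)$, exactly as done in the proof of Theorem \ref{flow-erg}.

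First I would introduce the associated continuous functions on $X$ by $\varphi(x)=\int_0^{\tau(x)}\Phi(x,t)\,\mathrm{d}t$ and $h(x)=\int_0^{\tau(x)}H(x,t)\,\mathrm{d}t$, and consider the additive (hence asymptotically additive) potentials $\mathcal{F}=\{S_nh\}_{n\ge 1}$, $\mathcal{G}=\{S_n\tau\}_{n\ge 1}$, $\Phi'=\{S_n\varphi\}_{n\ge 1}$, $\Psi'=\{S_n\tau\}_{n\ge 1}$ on $X$. Since $\tau$ is continuous and strictly positive on the compact space $X$, one has $\inf_X\tau>0$, so condition (\ref{denom}) holds for both $\Psi'$ and $\mathcal{G}$. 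Using the identity $\int\Phi\,\mathrm{d}\mu_\tau=\int\varphi\,\mathrm{d}\mu\big/\int\tau\,\mathrm{d}\mu$ and the fact that $\mathcal{L}:\mu\mapsto\mu_\tau$ is a bijection between $\mathcal{M}_T$ and $\mathcal{M}_\Theta$, the hypothesis $\inf_{\mu_\tau}\int\Phi\,\mathrm{d}\mu_\tau<\sup_{\mu_\tau}\int\Phi\,\mathrm{d}\mu_\tau$ translates exactly into $\inf_{\mu\in\mathcal{M}_T}\Phi'_*(\mu)/\Psi'_*(\mu)<\sup_{\mu\in\mathcal{M}_T}\Phi'_*(\mu)/\Psi'_*(\mu)$, which is the standing hypothesis of Theorem \ref{thm.f}.

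Next I would identify the irregular sets and the sets of empirical limits on both sides. Using (\ref{flow}), which holds for both $\liminf$ and $\limsup$, a point $(x,s)\in X_\tau$ with $0\le s<\tau(x)$ lies in $\widehat{X}_\Phi$ if and only if $x$ lies in the base irregular set $\widehat{X}_{\Phi',\Psi'}$; moreover $\limsup_{T\to\infty}\frac{1}{T}\int_0^T H(\theta_t(x,s))\,\mathrm{d}t=\limsup_{n\to\infty}S_nh(x)/S_n\tau(x)$. On the measure side, the bijection $\mathcal{L}$ intertwines flow-empirical averages with discrete-empirical averages in the following sense: by the standard argument used at the end of the proof of Theorem \ref{flow-erg}, $\mu_\tau\in\mathcal{V}((x,s))$ for some $(x,s)\in\widehat{X}_\Phi$ iff $\mu\in\mathcal{V}(x)$ for the corresponding base point $x\in\widehat{X}_{\Phi',\Psi'}$. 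Applying Theorem \ref{thm.f} to $\mathcal{F},\mathcal{G},\Phi',\Psi'$ then gives the three equalities among $\sup\{\mathcal{F}_*(\mu)/\mathcal{G}_*(\mu):\mu\in\mathcal{M}_T\}$, $\sup\{\mathcal{F}_*(\mu)/\mathcal{G}_*(\mu):\mu\in\bigcup_{x\in\widehat{X}_{\Phi',\Psi'}}\mathcal{V}(x)\}$, and $\sup_{x\in\widehat{X}_{\Phi',\Psi'}}\limsup_n S_nh(x)/S_n\tau(x)$; translating each through the identities $\int H\,\mathrm{d}\mu_\tau=\int h\,\mathrm{d}\mu/\int\tau\,\mathrm{d}\mu$, the flow/base identification of limsups, and the bijective correspondence of $\mathcal{V}$-sets, I obtain the three equalities claimed in the theorem.

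The main obstacle will be the careful verification that weak$^*$ limit points of the flow-empirical measures $\delta_{(x,s),T}=\frac{1}{T}\int_0^T\delta_{\theta_t(x,s)}\,\mathrm{d}t$ correspond exactly, through $\mathcal{L}$, to weak$^*$ limit points of the discrete empirical measures $\delta_{x,n}$ at the base point $x$; this requires splitting the time integral at the return times $S_n\tau(x)$ and controlling the boundary contribution coming from the partial return, uniformly in $s\in[0,\tau(x))$. All of this is standard, and everything else in the proof is essentially bookkeeping on top of Theorem \ref{thm.f}.
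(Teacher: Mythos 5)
Your proposal is correct and follows essentially the same route as the paper: reduce to Theorem \ref{thm.f} with the additive potentials $\mathcal{F}=\{S_nh\}_{n\ge 1}$, $\mathcal{G}=\Psi=\{S_n\tau\}_{n\ge 1}$, $\{S_n\varphi\}_{n\ge 1}$, using \eqref{flow} to identify the flow and base irregular sets and limsups, the bijection $\mu\mapsto\mu_\tau$ to transfer the hypothesis and the suprema, and the same correspondence of $\mathcal{V}$-sets as in the proof of Theorem \ref{flow-erg}. The only difference is that you flag explicitly the verification that flow-empirical limit points correspond to discrete-empirical limit points, which the paper dismisses as "a standard argument."
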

\begin{proof}First note that
\[
\sup \Big\{\int H\,\mathrm{d}\mu_\tau:\ \mu_\tau\in
\mathcal{M}_\Theta\Big\}=\sup \Big\{\frac{\int
h\,\mathrm{d}\mu}{\int \tau\,\mathrm{d}\mu}:\mu\in
\mathcal{M}_T\Big\}.
\]
Let $\widehat{X}_{\varphi,\tau}=\Big\{x\in
X:\lim\limits_{n\rightarrow\infty}\frac{S_n\varphi(x)}{S_n\tau(x)}~\text{does
not exist} \Big\}$, by (\ref{flow}) we know that
$x\in \widehat{X}_{\varphi,\tau}$ if and only if $(x,s)\in \widehat{X}_{\Phi}$ for all $0\le s\le \tau(x)$. Using similar
arguments as the proof of Theorem \ref{flow-erg}, we have that
\[\sup \Big\{\int H\,\mathrm{d}\mu_\tau:\ \mu_\tau\in
\bigcup_{(x,s)\in\widehat{X}_{\Phi}}\mathcal{V}((x,s))\Big\}=\sup
\Big\{\frac{\int h\,\mathrm{d}\mu}{\int
\tau\,\mathrm{d}\mu}:\mu\in\bigcup_{x\in
\widehat{X}_{\varphi,\tau}} \mathcal{V}(x)\Big\}.\]
On the other hand, note that
\[
\sup_{(x,s)\in \widehat{X}_\Phi}\limsup_{T\to\infty}\frac 1T \int_{0}^{T} H(\theta_t(x,s))\,\mathrm{d}t
=\sup_{x\in\widehat{X}_{\varphi,\tau}}\lim_{n\to\infty} \frac{S_nh(x)}{S_n\tau(x)}
\]
and  $\inf\limits_{\mu_\tau\in \mathcal{M}_\Theta}\int \Phi
\,\mathrm{d}\mu_\tau<\sup\limits_{\mu_\tau\in \mathcal{M}_\Theta}\int
\Phi \,\mathrm{d}\mu_\tau$ is equivalent to $\inf\limits_{\mu\in \mathcal{M}_T}\frac{\int \varphi
\,\mathrm{d}\mu}{\int \tau\,\mathrm{d}\mu}<\sup\limits_{\mu\in \mathcal{M}_T}\frac{\int
\varphi \,\mathrm{d}\mu}{\int
\tau \,\mathrm{d}\mu}$.
Consider $\mathcal{G}=\Psi=\{S_n\tau\}_{n\ge 1}$, $\Phi=\{S_n\varphi\}_{n\ge 1}$ and $\mathcal{F}=\{S_nh\}_{n\ge 1}$ in Theorem
\ref{thm.f}, the desired result immediately follows.
\end{proof}
 \noindent {\bf Acknowledgements.}  This work was finally completed when I am supported by CSC to visit Pennsylvania State University in the academic year 2013-2014. I would like to take this chance to thank Professor Yakov Pesin for the warm hospitality.
This work is partially supported by NSFC (11371271).

\end{document}